\def\({\left(}
\def\){\right)}
\def\Nx{\nabla}
\def\Cal{\mathcal}
\def\eb{\varepsilon}
\def\al{\alpha}
\def\Om{\Omega}
\def\di{\partial_i}
\def\la{\lambda}
\def\divv{\operatorname{div}}
\def\R {\mathbb{R}}
\def\A {{\mathcal A}}
\def\Tr{\operatorname{Tr}}
\def\<{\left<}
\def\>{\right>}
\def \pt {\partial_t}
\def\ext{\operatorname{ext}}
\def \and{\qquad\text{and}\qquad}
\def\Bbb{\mathbb}
\def\Dt{\partial_t}
\def\Dx{\Delta}
\def\rw{\rightarrow}
\def\be{\begin{equation}}
\def\ee{\end{equation}}
\newtheorem{proposition}{Proposition}[section]
\newtheorem{theorem}[proposition]{Theorem}
\newtheorem{corollary}[proposition]{Corollary}
\newtheorem{lemma}[proposition]{Lemma}
\theoremstyle{definition}
\newtheorem{definition}[proposition]{Definition}
\newtheorem{remark}[proposition]{Remark}
\numberwithin{equation}{section}
\def \au {\rm}
\def\t{\theta}
\def\Om{\Omega}
\def \no#1#2#3 {{\bf #1} (#3), #2.}
\def \eds#1#2#3 {#1, #2, #3.}
\title[ 3D Navier--Stokes--Voigt Equations]
{Attractors and their dimensions  for the \\3D
Fractional Navier--Stokes--Voigt Equations}
\author[A. Ilyin, V. Kalantarov and S.Zelik]
{Aleksei  Ilyin${}^{2,6}$, Varga  Kalantarov${}^{3,4}$ and Sergey  Zelik${}^{1,2,5,6}$}
\thanks{ }
\address{${}^1$ Zhejiang Normal University, Department of Mathematics, Zhejiang, China}
\address{${}^2$ Keldysh Institute of Applied Mathematics, Moscow, Russia}
\address{${}^3$Ko{\c c} University, Department of mathematics, Sariyer, Istanbul, Turkiye}
\address{${}^4$ Azerbaijan Technical University, Baku, Azerbaijan}
\address{${}^5$  University of Surrey, Department of Mathematics, Guildford,\\
GU2 7XH,  United Kingdom}
\address{${}^6$ HSE University, Nizhny Novgorod, Russia}
\subjclass[2020]{35Q30,35B41,37L30}
\keywords{Navier--Stokes equation, fractional Voigt regularization,
attractors, fractal dimension, turbulence.}
\begin{document}

\begin{abstract}
We study the dimensions of the  attractors for the  fractional
Navier--Sto\-kes--Voigt equations. These equations, which include a
fractional order of the Stokes operator applied to the time
derivative, serve as  natural extensions and regularizations of the
classical Navier--Stokes equations. We give a comprehensive
analysis of the upper bounds for the fractal dimensions of the
attractor in terms of the relevant physical  parameters based on
the advanced spectral inequalities such as Lieb--Thirring and
Cwikel--Lieb--Rosenblum inequalities. These results extend previous
works on the classical Navier--Stokes--Voigt system to the
fractional setting and give an essential improvement of the
estimates known before for the non-fractional case as well.
\end{abstract}

\maketitle
\tableofcontents

 \vskip0.25in

 \vskip0.25in

\section{Introduction }

 \indent We consider the initial-boundary value problem for the three-dimensional fractional Navier–Stokes–Voigt (fNSV) equations in a bounded domain $\Omega \subset \mathbb{R}^3$ with sufficiently smooth boundary $\partial \Omega$:
%$$
\begin{equation}\label{0.eqmain}
\begin{cases}
 (1+\alpha^s A^s) \Dt u + (u, \nabla)u - \nu \Delta u+ \nabla p = h,\\
    \divv u = 0,\quad
  u\big|_{\partial \Omega} = 0, \quad   u\big|_{t=0} = u_0.
  \end{cases}
   \end{equation}
%$$
Here, $u=u(x,t)=(u_1,u_2,u_3)$ denotes the velocity field,
$p=p(t,x)$ is the pressure, $\nu > 0$ is the kinematic viscosity,
$h\in H^{-1}_\sigma$ is a given external force, $\alpha > 0$ is the
regularization parameter, and $s \in [\frac{1}{2}, 1]$ stands for
the order of the fractional Stokes operator $A$, and
$$
(u,\Nx)u:=\sum_{i=1}^3u_i\partial_{x_i}u,
$$
 see section \ref{s1} for the details.
\par
The Navier--Stokes--Voigt equations (corresponding to $s = 1$ in
\eqref{0.eqmain}) were first proposed by O. Ladyzhenskaya in her
1966 ICM talk \cite{LaC} as a possible regularization of the 3D
Navier--Stokes system. Later, in \cite{Pav}, these equations were
also shown to describe the motion of aqueous polymer solutions. The
system \eqref{0.eqmain} was also introduced in \cite{CLT} as a
regularization tool, particularly useful in numerical simulations
of the Navier--Stokes equations for small values of $\alpha$.
\par
This type of regularization has also been applied in the study of
various nonlinear PDEs, including the nonlinear Klein--Gordon
equation, von K\'{a}rm\'{a}n dynamics, the Kirchhoff equation, and
the 2D surface quasi-geostrophic model (see, e.g., \cite{Lions},
\cite{Chu1}, \cite{KhTi}, and references therein).

The first mathematical analysis of the NSV system under Dirichlet
boundary conditions was carried out by A. Oskolkov in \cite{Osk1},
where global well-posedness was proved. Subsequent studies have
explored the qualitative behavior of solutions; see \cite{Sv},
\cite{Su}, \cite{ZvTu}, and others. In recent years, there has been
growing interest in the NSV equations and various generalizations
of them, driven by their relevance to the challenging 3D
Navier--Stokes problem and their applicability to various physical
models (see, e.g., \cite{Ebr}, \cite{Kra}, \cite{LaPe},
\cite{PuFr}, \cite{Str}).
%In recent years, there has been growing
%interest in the NSV equations, driven by their relevance to the
%challenging 3D Navier--Stokes problem and their applicability to
%various physical models (see, e.g., \cite{Ebr}, \cite{Kra},
%\cite{LaPe}, \cite{PuFr}, \cite{Str}).
% tut byl povtor v tekste
 For instance, in \cite{KoSv}, the authors studied a NSV system with
 Brinkman--Forchheimer type perturbation  with a cubic
 source term of a ``wrong" sign
$$
\pt u-\al\Dx \pt u-\nu \Dx u-(v, \Nx)u+\Nx p=|u|^2u, \ \divv u=0,
$$
and provided sufficient conditions on the initial data
leading to finite-time blow-up.
\par
The NSV system is also referred to in the literature as the Kelvin--Voigt model
or Oskolkov's system. In \cite{IlZe1}, improved bounds were
derived for the dimension of the global attractor of the NSV semigroup.
In the 2D case, these estimates converge to those for the classical
Navier--Stokes system as $\alpha \to 0$.\\
Another interesting version is  a subdiffusive
Navier--Stokes--Voigt system
%$$
\begin{equation}\label{frdif}
\Dt u-\al\Dx \Dt u-\nu \Dx u-(v, \Nx)u+\Nx p=0,\ \ v+\beta  Av=u
\end{equation}
%$$
 is introduced in  \cite{Slo} (see also \cite{Kra}) to describe the relaxation process
in polymers.

\par
Finally, we note that regularization terms of the form
$\alpha (-\Delta)^s \partial_t v , s\in (0,1]\ $ have
also been used in the study of the 3D
Navier--Stokes, weakly damped nonlinear wave equations, and Kirchhoff-type
systems (see, e.g., \cite{AlKa}, \cite{Chu1}, \cite{KaZe}, \cite{SaZe}).

\par
The analytic properties of equations of the form \eqref{0.eqmain}
look well-understood and, for instance, the global well-posedness
of solutions can be obtained in a straightforward way
(see e.g. \color{black}\cite{La1,BrLaSa,Ka2,Kalantarov-Levant-Titi,KKO,Osk1}\color{black}). In the 3D case, we need the restriction
$s\ge\frac12$ for this result and for the 2D case $s\ge0$ is enough.
In the case of 3D fractional Euler--Voigt system, which corresponds
to $\nu=0$ in \eqref{0.eqmain}, the assumption $s\ge\frac56$ is needed.
Moreover, some natural convergence results as $\alpha \to 0$
or $\nu,\alpha\to0$ can be also established (see \cite{BrLaSa}).
\par
The study of the long-time behaviour of solutions of \eqref{0.eqmain}
can be initiated using the theory of attractors also in a more or
less straightforward way. For instance, the dissipativity and the
existence of a global attractor $\mathcal A$ for the case $s=1$
is established in \cite{Ka1}.  The finite-dimensionality of this
attractor and some (rough)  upper bounds on its dimension was
obtained in \cite{KaTi1}. These bounds have been improved later,
see \cite{CotiGal} and \cite{IlZe1}. For instance, the following
estimate for the dimension of the attractor for the 3D case with
$s=1$ is obtained in \cite{IlZe1}:
%$$
\begin{equation}\label{0.wrong}
\dim_f(\Cal A)\le C G^2(\alpha\lambda_1)^{-\frac34}\min\big\{(\alpha\lambda_1)^{-\frac34},G^2\big\}
\end{equation}
%$$
where
$$G=\frac{|\Omega|^{1/2}\|h\|_{L^2}}{\nu^2}$$ is the Grashof number in three dimensions,
$\lambda_1$ is the first eigenvalue of the Stokes operator and $C$ is
some absolute constant. This estimate is valid if $\alpha\lambda_1$
is small enough. The key difference which makes bounding of the
attractor's dimension harder than in the case of Navier--Stokes equation
is the presence of two independent dimensionless parameters:
the first one is the Grashof number $G$ as in Navier--Stokes equation
and the second one
$$G_1=\alpha|\Omega|^{-2/3}\sim \alpha\lambda_1 $$
is new and is specific for the Voigt regularization considered,
so the asymptotics for the dimension depend on the relations
between these two parameters. The Grashof number is assumed big
(since for small $G$ the attractor becomes just one exponentially stable point),
but the parameter $G_1$ may be arbitrary (small, big or intermediate).
\par
The main aim of our paper is to give a comprehensive study of
the dimension of the attractor $\Cal A$ for equation \eqref{0.eqmain}
for various values of parameters $s$, $\alpha$, and $\nu$. In particular,
we study the limit $\alpha\to\infty$ as well,
%(which was never done before,
%to the best of our knowledge).
\color{black}
see \cite{ZIK} for some results in this direction.
\color{black}
 The main result of the paper is the following theorem.
\begin{theorem}\label{Th0.main} Let $\Omega$ be a smooth
bounded domain and $h\in L^2(\Omega)$. Then, for $s\in[\frac34,1]$
the attractor $\Cal A$ of problem \eqref{0.eqmain} possesses the
following upper bound:
%$$
\begin{equation}\label{0.est1}
\dim_f(\Cal A)\le C\(\frac{1+G_1^s}{G_1^s}G\)^{\frac32}.
\end{equation}
%$$
For the case $s\in[\frac12,\frac34]$ the corresponding estimate reads
%$$
\begin{equation}\label{0.est2}
\dim_f(\Cal A)\le C_1\(\frac{1+G_1^s}{G_1^s}\)^{\frac32}G^{6(1-s)}.
\end{equation}
%$$
Moreover, for $s=1$ and $G_1^3G^4\ll1$, we have
%$$
\begin{equation}\label{0.est3}
\dim_f(\Cal A)\le C_2\(G_1G^3\)^{\frac6{13}}.
\end{equation}
%$$
\end{theorem}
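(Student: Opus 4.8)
The plan is to bound $\dim_f(\Cal A)$ through the volume-contraction (Constantin--Foias--Temam) method, carried out in the norm adapted to the equation. Writing $M=1+\al^s A^s$ and equipping the phase space with the inner product $\langle u,v\rangle_M=(u,v)+\al^s(A^{s/2}u,A^{s/2}v)$, I would first record that the linearization of \eqref{0.eqmain} along a trajectory $u(\tau)$ on $\Cal A$ reads $M\pt v+\nu Av+P[(v,\Nx)u+(u,\Nx)v]=0$, $\divv v=0$, so its generator is $\mathcal L(u)=-M^{-1}(\nu A+L_u)$ with $L_u v=P[(v,\Nx)u+(u,\Nx)v]$. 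By the abstract theory it suffices to find the smallest $n$ with $q_n:=\limsup_{T\to\infty}\sup_{u_0\in\Cal A}\frac1T\int_0^T\sup_{\{\varphi_j\}}\sum_{j=1}^n\langle\mathcal L(u(\tau))\varphi_j,\varphi_j\rangle_M\,d\tau<0$, the inner supremum being over $M$-orthonormal families $\{\varphi_j\}_{j=1}^n$. Using $\langle M^{-1}w,\varphi\rangle_M=(w,\varphi)$ and the cancellation $((u,\Nx)\varphi,\varphi)=0$, the trace collapses to $\sum_j\langle\mathcal L\varphi_j,\varphi_j\rangle_M=-\nu\sum_j\|\Nx\varphi_j\|^2-\sum_j((\varphi_j,\Nx)u,\varphi_j)$.

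The two spectral ingredients come next. For the dissipative term, the min--max principle applied to the generalized eigenvalue problem $A\psi=\mu M\psi$ gives $\sum_j\|\Nx\varphi_j\|^2\ge\sum_{j=1}^n\mu_j$ with $\mu_j=\la_j/(1+\al^s\la_j^s)$, which together with the Weyl law $\la_j\sim\la_1 j^{2/3}$ makes this quantity explicit. The decisive structural observation is that $\Gamma=\sum_{j=1}^n\varphi_j\otimes\varphi_j$ satisfies $0\le\Gamma\le I$ on $L^2$: indeed $M$-orthonormality means $\varphi_j=M^{-1/2}\psi_j$ with $\{\psi_j\}$ $L^2$-orthonormal, whence $\langle\Gamma f,f\rangle=\sum_j|(\psi_j,M^{-1/2}f)|^2\le\|M^{-1/2}f\|^2\le\|f\|^2$ because $M^{-1}\le I$. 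This is exactly the hypothesis needed to invoke the \emph{operator} forms of the Lieb--Thirring and Cwikel--Lieb--Rosenblum inequalities for the density $\rho(x)=\sum_j|\varphi_j(x)|^2$, e.g. $\int_\Om\rho^{5/3}\,dx\le C\,\Tr(A\Gamma)$ in three dimensions. The Voigt regularization itself enters as a suppression of the total mass, $\int_\Om\rho\,dx=\Tr\Gamma\le\sum_{j=1}^n\nu_j$ with $\nu_j=(1+\al^s\la_j^s)^{-1}$, together with the budget $\Tr(A^s\Gamma)=\sum_j\|A^{s/2}\varphi_j\|^2\le n\al^{-s}$ forced by $\|\varphi_j\|_M=1$; it is this shrinking of $\int\rho$ for large $\al$ that makes the dimension decrease as $G_1$ grows.

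The convective term is estimated by $|\sum_j((\varphi_j,\Nx)u,\varphi_j)|\le\int_\Om|\Nx u|\,\rho\,dx$. Controlling the right-hand side requires a norm of $\rho$ that the two-dimensional argument would supply for free but which in 3D sits just above the Lieb--Thirring exponent $5/3$. I would therefore interpolate the needed norm of $\rho$ between the Lieb--Thirring bound (in terms of $\sum\|\Nx\varphi_j\|^2$) and the mass/budget bounds $\int\rho\le\sum\nu_j$ and $\Tr(A^s\Gamma)\le n\al^{-s}$, pair it against $\Nx u$ through the attractor enstrophy bound $\langle\|\Nx u\|^2\rangle\le\nu^{-2}\|h\|_{H^{-1}}^2$ (which is what introduces the Grashof number $G$), substitute $\sum\|\Nx\varphi_j\|^2\ge\sum_{j=1}^n\mu_j$, and optimize in $n$. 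The case split is dictated by the crossover $\al^s\la_n^s\sim1$, i.e. $n\sim G_1^{-3/2}$, separating the viscous regime ($\mu_j\approx\la_j$, $\sum_{j\le n}\mu_j\sim\la_1 n^{5/3}$) from the Voigt-dominated regime ($\mu_j\approx\al^{-s}\la_j^{1-s}$, $\sum_{j\le n}\mu_j\sim\al^{-s}\la_1^{1-s}n^{(5-2s)/3}$). Balancing against the convective contribution produces \eqref{0.est1} for $s\in[\tfrac34,1]$ and \eqref{0.est2} for $s\in[\tfrac12,\tfrac34]$, the two estimates agreeing at $s=\tfrac34$.

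For $s=1$ one has $\mu_j=\la_j/(1+\al\la_j)\to\al^{-1}$, so in the Voigt-dominated range $\sum_{j\le n}\mu_j$ grows only linearly in $n$ and the generic optimization is no longer sharp; I would then bound $n$ directly by the dimension of the subspace on which the form $\varphi\mapsto\nu(A\varphi,\varphi)-((\varphi,\Nx)u,\varphi)$ fails to be positive, estimating this dimension via the Cwikel--Lieb--Rosenblum inequality (again legitimate because $0\le\Gamma\le I$), which controls it by a multiple of $\int_\Om|\Nx u|^{3/2}\,dx$. Carrying this through under $G_1^3G^4\ll1$ and optimizing yields the refined bound \eqref{0.est3}. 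The main obstacle throughout is the three-dimensional convective term: unlike in 2D, the enstrophy controls $\rho$ only in $L^{5/3}$, one notch below what a direct pairing with $\Nx u$ needs, so the estimate can be closed only by genuinely exploiting the Voigt-induced suppression of $\int\rho$ and the extra $H^s$-regularity of $u$ on $\Cal A$, combined with the sharp operator forms of the spectral inequalities. The remaining difficulty is the bookkeeping: the two independent dimensionless parameters $G$ and $G_1$ force a careful optimization whose outcome changes character at $s=\tfrac34$ and, for $s=1$, at the threshold $G_1^3G^4\sim1$.
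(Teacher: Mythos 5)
Your treatment of cases \eqref{0.est1} and \eqref{0.est2} is in essence the paper's proof (Theorem \ref{Th3.main1}): the same volume-contraction/trace scheme in the metric $(u,v)+\al^s(A^{s/2}u,A^{s/2}v)$, the same cancellation of the transport part, the same generalized-eigenvalue lower bound $\sum_j\|\Nx\varphi_j\|^2\ge\sum_j\lambda_j/(1+\al^s\lambda_j^s)$ (which the paper makes quantitative via the Berezin--Li--Yau inequality for the Stokes operator, Proposition \ref{PropA.est}, rather than Weyl asymptotics), and the same observation that orthonormality in the weighted metric yields suborthonormality in $L^2$ and in $H^s_A$ (Lemma \ref{LemA1}), which is what licenses the Lieb--Thirring and Cwikel--Lieb--Rosenblum inequalities for $\rho$. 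One caution on the middle range: the paper does \emph{not} interpolate against your mass bound $\int_\Omega\rho\,dx\le\sum_j(1+\al^s\lambda_j^s)^{-1}$; it interpolates the CLR bound $\|\rho\|_{L^{3/(3-2s)}}\le C_{s,CLR}N^{1-2s/3}$ (valid for systems suborthonormal in $H^s_A$; transferring Lieb's inequality from $\R^3$ to $\Omega$ via extension by zero and the regional fractional Laplacian is a genuine step, Lemma \ref{LemA.sub}, that your sketch glosses over) against $\|\rho\|_{L^3}\le C_{Sob,6}^2R$, and then absorbs $R^{(3-4s)/(4(1-s))}$ by Young, leaving $\|\Nx u\|_{L^2}^{4(1-s)}$ to be averaged. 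Your $L^1$-mass variant would instead leave powers of $\|\Nx u\|_{L^2}$ beyond $2$ after Young, and Corollary \ref{Cor2.est} together with \eqref{obvious} only controls time averages of powers in $[1,2]$ (indeed $4(1-s)\le2$ exactly because $s\ge\frac12$), so that route does not close.

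The genuine gap is your plan for \eqref{0.est3}. Bounding $N$ by the dimension of the subspace on which $\nu(A\varphi,\varphi)-((\varphi,\Nx)u,\varphi)\le0$ and estimating that dimension by CLR cannot produce \eqref{0.est3}: this quadratic form does not contain $\al$ at all (negativity of a form is metric-independent), so CLR gives a count of order $\nu^{-3/2}\int_\Omega|\Nx u|^{3/2}dx$, which after time averaging reproduces only a $G^{3/2}$-type bound --- essentially \eqref{0.est1} again --- with no mechanism to generate the exponents $\frac6{13}$, $\frac{18}{13}$ or any $G_1$-dependence. The paper (Theorem \ref{Th3.main2}) instead exploits the two suborthonormality facts \emph{simultaneously} on the same quantity: the Lieb--Thirring bound $\|\rho\|_{L^2}\le C_{LT}^{3/8}C_{Sob,6}^{3/4}R^{3/4}$ (from $L^2$-suborthonormality of $\{\theta_k\}$) and the CLR bound $\|\rho\|_{L^2}\le C_{1,CLR}|\Omega|^{1/6}\al^{-1}N^{1/3}$ (from suborthonormality of $\{\al^{1/2}\theta_k\}$ in $V^1$), combined multiplicatively as $\|\rho\|_{L^2}=\|\rho\|_{L^2}^{2/3}\|\rho\|_{L^2}^{1/3}\lesssim \al^{-1/3}N^{1/9}R^{1/2}$. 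Young's inequality then leaves a term $\propto\al^{-2/3}N^{2/9}\|\Nx u\|_{L^2}^2$, balanced against the simplified spectral bound $\zeta\ge\frac3{10}C_{BLY}|\Omega|^{-2/3}N^{5/3}$ of \eqref{A.ggest}, which is valid only under the smallness $C_{BLY}G_1N^{2/3}\le1$; the self-consistency of this smallness with the resulting $N\sim G_1^{-6/13}G^{18/13}$ is precisely where the hypothesis $G_1^3G^4\ll1$ (condition \eqref{condition}) comes from --- a threshold your proposal invokes but never derives.
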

We  give the explicit values for the constants involved in these
estimates. We also have to note that the lower bounds for the
attractors dimensions for equation \eqref{0.eqmain} are not known,
so we do not know how good are the estimates in Theorem
\ref{Th0.main}. The only thing which we may say is that, even in
the case $s=1$, the obtained estimates are essentially better than
estimate \eqref{0.wrong} known before. Some exception is the case
$G_1\to\infty$. The limit equation (after the proper time scaling)
reads
 $$
 A^s\Dt u+(u,\Nx)u+\Nx p=\nu\Dx u+h
 $$
 and for the lower bounds for the attractor dimension we
 may use the method suggested in \cite{KISZ}
 (assuming that the key Vishik vortex for this equation exists).
 The realization of this scheme should give us the lower bound of
 order $G$ for the dimension. Thus, we expect a gap between the upper
 bound ($G^{3/2}$) and the lower bound ($G$) even in the case $s=1$ and
 $G_1\to\infty$. We return to this problem somewhere else.
 \par
 The paper is organized as follows. In section \ref{s1}, we briefly
 discuss the functional spaces, which are standard for hydrodynamics
 as well as the fractional powers of the Stokes operator. In section
 \ref{s2}, we discuss the standard facts concerning the well-posedness of
 the problem considered, its dissipativity and the existence of a global attractor.
 \par
 The proof of the main Theorem \ref{Th0.main} is given in section \ref{s3}.
 This proof is heavily based on the advanced spectral inequalities such as
 Berezin--Li--Yau, Lieb--Thirring, Cwikel--Lieb--Rosenblum inequalities
and some combinations of them, some of these combinations have an
independent interest and are presented in the appendix.

%$*****************************************$
%There are some papers devoted to the
%Kelvin-Voigt-Brinkman- Forchheimer (KVBF) equations,
%\be\label{kvbf1}
%\p_t v-\al\Dx\pt v -\nu \Dx v +(v,\Nx)v+\Nx p+f(x,v)=h\ee
%wherer $\al>0, \nu>0.$ Typical example for $f(v)$ is the function  $f(v)=av +|v|^pv, \ a\in \R, %p\ge1.$C.T. Anh and T.Ph.Thi \cite{AnTr}. studied the initial boundary value problem for the %equation \eqref{kvbf1} in unbounded domain.\\
% The paper of M. Mohan  \cite{Moh}
%is devoted to the problem of existence of a global and exponential attractor and estimate of %dimension of the attractors for for the  (KVBF) equations \eqref{kvbf1} under the homogeneous %Dirichlet's  boundary condition.
%considered the initial boundary problem for the fractional NSV (fNSV) equations
%\be\label{fNSV}\p_t v+\al(-\Dx)^\t \pt v -\nu \Dx v +(v,\Nx)v+\Nx p=h, \ x\in \Om\subset \R^3, %t>0,\ee

\section{Preliminaries: function spaces and  fractional powers of the Stokes operator }\label{s1}
In this section, we briefly recall the standard facts which will be
used throughout of the paper. We start with the spaces of distributions.
\par
 Let $\Omega\subset\R^3$ be a bounded smooth domain in $\R^3$. We
 denote by $D(\Omega)$ and $D_\sigma(\Omega)$ the space $C_0^\infty(\Omega)$
 (resp. $C_{0,\sigma}^\infty(\Omega)$ of divergence free vector fields)
 endowed with the topology of a (locally-convex strict) inductive limit,
 see \cite{Io,Smo}.  Then, the space of distributions $D'(\Omega)$
 (resp. solenoidal distributions $D'_\sigma(\Omega)$) is defined
 as a dual space to $D(\Omega)$ (resp. $D_\sigma(\Omega)$). It is well-known
 that $D(\Omega)$ is dense in $D'(\Omega)$, so for every $l\in D'(\Omega)$
 there is a sequence $l_n\in D(\Omega)$ such that $l_n\to l$  weakly
 in $D'(\Omega)$. We also recall that $l_n\in D(\Omega)$ means that
 there exists a function $g_{l_n}\in D(\Omega)$ such that
$$
l_n\varphi=(g_{l_n},\varphi),
$$
where $(\cdot,\cdot)$ is a standard inner product in $L^2(\Omega)$.
Therefore, the duality between $D(\Omega)$ and $D'(\Omega)$ can be
defined via
%$$
\begin{equation}\label{1.dual}
\<l,\varphi\>:=\lim_{n\to\infty}(g_{l_n},\varphi).
\end{equation}
%$$
In other words, the space $D'(\Omega)$ is a completion of $L^2(\Omega)$
with respect to the system of seminorms
$\{\|g\|_{\varphi}:=|(g,\varphi)|\}_{\varphi\in D(\Omega)}$
\par
The analogous formulas hold for the solenoidal distributions as well.
To write them out we first introduce the key space
$$
\Cal H:=[D_\sigma(\Omega)]_{L^2(\Omega)}=\{u\in L^2(\Omega)\,: \divv u=0, \ u\cdot n\big|_{\partial\Omega}=0\},
$$
where $[V]_{W}$ is a closure of the set $V$ in the space $W$, see \cite{tem}.
Let $(\cdot,\cdot)_\sigma$ be a restriction of the inner product $(\cdot,\cdot)$
on $L^2(\Omega)$ to the space $\Cal H$.  Then, for every $l\in D'_\sigma(\Omega)$
there is a sequence $l_n\in D_\sigma(\Omega)$ such that $l=\lim_{n\to\infty}l_n$
and the duality between $D'_\sigma(\Omega)$ and $D_\sigma(\Omega)$ is defined via
%$$
\begin{equation}\label{1.dual-s}
\<l,\varphi\>_\sigma:=(g_{l_n},\varphi)_\sigma
\end{equation}
%$$
and $D'_\sigma(\Omega)$ is a completion of the locally convex space $\Cal H$
generated by the system of seminorms
$\{\|g\|_\varphi:=|(g,\varphi)_\sigma|\}_{\varphi\in D_\sigma(\Omega)}$.
\par
As the next step, we recall that the Sobolev space
$W^{m,p}(\Omega)$, $m\in \mathbb Z_+$, $1\le p\le \infty$, is a
subspace of $D'(\Omega)$ defined by the finiteness of the following norm:
$$
\|u\|_{W^{m,p}}^p:=\sum_{|\beta|\le m}\|D^\beta_x u\|^p_{L^p},
$$
where $\beta:=(\beta_1,\beta_2,\beta_3)\in\Bbb Z^3_+$ and
$D^\beta_x:=\partial^{\beta_1}_{x_1}\partial^{\beta_2}_{x_2}\partial_{x_3}^{\beta_3}$.
As usual,
 $W^{l,p}_0(\Omega)$ is the closure of $D(\Omega)$ in $W^{l,p}(\Omega)$.
 It is convenient to introduce the equivalent truncated norm in the spaces
 $W^{l,p}_0$:
 $$
\|u\|_{W^{l,p}_0}^p:= \sum_{|\beta|= m}\|D^\beta_x u\|^p_{L^p}
 $$
 For the non-integer indexes $m\ge0$ the space $W^{m,p}(\Omega)$ is defined
via the Besov  norms
$$
\|u\|_{W^{m,p}}^p:=\|u\|_{W^{[m],p}}^p+\sum_{|\beta|=[m]}\int_{x\in\Omega}
\int_{y\in\Omega}\frac{|D^\beta_xu(x)-D^\beta_y u(y)|^p}{|x-y|^{n+p\{m\}}}\,dx\,dy.
$$
The spaces $W^{m,p}_\sigma(\Omega)$ and $W^{m,p}_{0,\sigma}(\Omega)$ are defined analogously. In the case $p=2$, we will write the symbol $H$ instead of $W$, see \cite{Ad, Tri} for more details.
\par
Furthermore, we define negative Sobolev spaces $H^{-s}(\Omega)$ (resp. $H^s_\sigma(\Omega)$) as a completion  of $L^2(\Omega)$ (resp. $\Cal H$) with respect to the following norm
%$$
\begin{multline*}
\|u\|_{H^{-s}}:=\sup\{|(u,\varphi)|\,:\ \varphi\in D(\Omega),\ \|\varphi\|_{H^s_0}=1\}\ \text{ resp. }\\
\|u\|_{H^{-s}_\sigma}:=\sup\{|(u,\varphi)_\sigma|\,:\ \varphi\in D_\sigma(\Omega),\ \|\varphi\|_{H^s_0}=1\}.
\end{multline*}
%$$
We now introduce the Leray--Helmuoltz projector $\Pi$.
On the space $L^2(\Omega)$ it is defined as an orthoprojector
 to the closed subspace $\Cal H\subset L^2(\Omega)$.
In other words, it is defined via bilinear forms for $u\in L^2(\Omega)$,
$\Pi u\in\Cal H$ is defined via the Riesz representation theorem:
%$$
\begin{equation}\label{1.pi-h}
(\Pi u,\varphi)_\sigma:=(u,\varphi).
\end{equation}
%$$
It is well-known that $\Pi$ can be written in the form of
zero order pseudodifferential operator and, by this reason it
can be extended by continuity to the operator
$\Pi: W^{s,p}(\Omega)\to W^{s,p}(\Omega)$ for all $s\ge0$ and $1<p<\infty$.
Note that, $\Pi$ does not map $W^{s,p}_0(\Omega)$ to $W^{s,p}_{0,\sigma}(\Omega)$
and this causes extra difficulties in the theory of Navier--Stokes
equations in domains.
\par
The Leray--Helmholtz projector for negative Sobolev spaces is
more delicate and  depends strongly on how exactly the duality is
chosen (for simplicity, we will consider the Hilbert case $p=2$ only).
If the above duality between $H^s_{0,\sigma}(\Omega)$ and $H^{-s}_\sigma(\Omega)$
is chosen, the extension of $\Pi$ to negative Sobolev spaces is done via
the following bi-linear form:
%$$
\begin{equation}\label{1.triv}
\<\Pi u,\varphi\>_\sigma:=\<u,\varphi\>,\ \ u\in H^{-s}(\Omega),\ \ \varphi\in D_\sigma(\Omega).
\end{equation}
%$$
Thus, the ``projector" $\Pi$ acts from $H^{-s}(\Omega)$ to $H^{-s}_{\sigma}(\Omega)$.
\begin{remark}\label{Rem1.fac} Recall that, according to general
theory, the dual to a subspace is a factor space over the annulator
of the subspace in the dual of a larger space. By this reason,
the space $H^{-s}_\sigma(\Omega)$ is a factor space of $H^{s}(\Omega)$
over the space of gradient vector fields $H^{-s}_{grad}(\Omega)$ in
$H^{-s}(\Omega)$. In such interpretation, the map $\Pi$, defined by
\eqref{1.triv},  looks like $\Pi:\,u\to[u]$, where $[u]$ is a class
of equivalence of the element $u\in H^{-s}(\Omega)$. Since $H^{-s}(\Omega)$
is a Hilbert space, any its subspace is complementable, so we may
write a direct sum
$$
H^{-s}(\Omega)=H^{-s}_{grad}(\Omega)\oplus W
$$
for some subspace $W$ such that $W\cap H^{-s}_{grad}=\{0\}$ and try to
identify $H^{-s}_\sigma(\Omega)$ with $W$ and, finally interpret $\Pi$
as a map from $H^{-s}(\Omega)$ to $W$. This would allow us to use the
standard duality \eqref{1.dual} for the space of solenoidal distributions.
The key problem here is that the choice of $W$ should be compatible
with approximations by smooth functions, in particular we should have
the embedding $D_\sigma(\Omega)\subset W$ (and on the smooth functions
the projector $\Pi$ thus defined should coincide with the one, defined
via \eqref{1.pi-h}). This problem is solvable for $s>-\frac12$
(the borderline case $s=-\frac12$ is more delicate and is not discussed here)
and the ``canonical" choice of $W$ is the following one:
%$$
\begin{equation}\label{1.W}
H^{-s}_\sigma(\Omega)\equiv W:=\{u\in H^{-s}(\Omega)\,:\ \divv u=0,\ \ (u\cdot n)\big|_{\partial\Omega}=0\},
\end{equation}
%$$
see e.g. \cite{Am} for more details. This is closely related with
the uniqueness in the Helmholtz decomposition
%$$
\begin{equation}\label{1.helm}
u=v+\Nx p,\ \ u\in H^{-s}(\Omega),\ \ v\in W.
\end{equation}
%$$
If $s<-\frac12$, we still have the Helmholtz decomposition of any
vector field to a divergence free and gradient parts (see
\cite{tem} for the details), but this decomposition is defined up
to the harmonic vector field and we cannot fix this harmonic field
since the boundary condition $(v,n)\big|_{\partial\Omega}$ in the
definition of $W$ makes no sense any more. As a result, different
smooth approximations of the same function $u\in H^{-s}(\Omega)$
may have different harmonic components in the Helmholtz
decomposition, so the canonic choice of $W$ does not exist.
Therefore, we are unable to use the standard duality \eqref{1.dual}
for $H^{-s}_\sigma(\Omega)$ if $s<-\frac12$, see also Remark
\ref{Rem1.strange} for more details.
\end{remark}
We are now ready to define the Stokes operator $A: H^1_{0,\sigma}(\Omega)\to H^{-1}_\sigma(\Omega)$. We do this using bi-linear forms, namely,
%$$
\begin{equation}\label{1.stokes}
\<Au,v\>_\sigma:=(\Nx u,\Nx v)=\<-\Dx u,v\>,\ \ u,v\in H^1_{0,\sigma}(\Omega).
\end{equation}
%$$
Obviously, $A=-\Pi\Dx$. This operator is self-adjoint, positive definite
and realizes an isometry between $H^1_{0,\sigma}(\Omega)$
and $H^{-1}_\sigma(\Omega)$, see e.g., \cite{tem}:
$$
\|Au\|_{H^{-1}_\sigma}=\|u\|_{H^1_{0,\sigma}}.
$$
 Moreover, $A^{-1}:\Cal H\to H^1_{0,\sigma}(\Omega)$ is compact, so by
 the Hilbert--Schmidt theorem,
 there exists a complete orthonormal in $\Cal H$ system of
 eigenvectors $\{e_n\}_{n=1}^\infty$  of $A$ such that
 %$$
 \begin{equation}
 Ae_n=\lambda_n e_n,\ \ 0<\lambda_1\le\lambda_2\le\cdots,\ \ u=\sum_{n=1}^\infty(u,e_n)e_n, \ \ \|u\|^2_{\Cal H}=\sum_{n=1}^\infty (u,e_n)^2,
 \end{equation}
 %$$
 where $\{\lambda_n\}_{n=1}^\infty$ are the eigenvalues of the Stokes
 operator $A$.  This diagonalization of $A$ allows us to define a
 scale of Hilbert spaces $H_A^s$, $s\in\R$, which are the completions
 of $\Cal H$ with respect to the following norms
 %$$
\begin{equation}\label{1.HA}
\|u\|^2_{H^s_A}:=\sum_{n=1}^\infty\lambda_n^s(u,e_n)^2<\infty
\end{equation}
%$$
as well as the fractional powers $A^s$ of the operator $A$:
%$$
\begin{equation}\label{1.Afrac}
A^s u:=\sum_{n=1}^{\infty}\lambda^s(u,e_n)e_n.
\end{equation}
%$$
Obviously, the operators $A^\beta$, $\beta\in\R$, are isometries between $H^s_A$ to $H^{s-2\beta}_A$. We know that $H^0_A=\Cal H$, $H^1_A=H^1_{0,\sigma}$ and $H^{-1}_A=H^{-1}_\sigma$. For smooth domains $\Omega$, we also have the explicit description for all spaces $H^s_A$ via the interpolation
%$$
\begin{equation}\label{1.inter}
H^s_A=\{u\in H^s(\Omega)\,: \divv u=0,\ \ (u.n)\big|_{\partial\Omega}=0,\ \ u\big|_{\partial\Omega}=0\}
 \end{equation}
 %$$
 for all $s\in[0,2]$ with $s\ne\frac12$
 (the trace on the boundary is included when it is well-defined,
 i.e., for $s>\frac12$). For $s=\frac12$, we have a slightly different
formula which includes the part coming from  the Hardy inequality:
 $$
 H^{1/2}_A=\{u\in H^{1/2}(\Omega)\,:\ \int_{\Omega}d^{-1}(x,\Omega)|u(x)|^2\,dx<\infty,\ \ (u.n)\big|_{\partial\Omega}=0\},
 $$
 where $d(x,\Omega)$ is a distance from $x$ to the boundary $\partial\Omega$,
 see e.g. \cite{G}. For the negative $s$ we may use the duality
 arguments, see \cite{Tri}.
 \par
 Thus, since the spaces $H^s_A$ are  close to the usual Sobolev spaces,
 we have the same Sobolev embeddings for them as in the case of the
 classical Sobolev spaces. We will use this fact without further mentioning.
 We also will sometimes use different notation $V^s$ for the spaces $H^s_A$.
 Finally, we mention the alternative notation $D(A^{s/2})$ for $H^s_A$, where
 the space $H^s_A$ is interpreted as the domain of the operator $A^{s/2}$
 considered as an unbounded operator in $\Cal H$.
 \begin{remark}\label{Rem1.strange} We mention one more fact concerning
 the spaces $H^{-1}_\sigma(\Omega)$ and $H^{-1}(\Omega)$ (or more general,
 for the negative Sobolev spaces with $s<-\frac12$), which is presented in
 \cite{GS}. Namely, there exist a sequence of smooth divergence free functions
 $u_n\in D_\sigma(\Omega)$  with the following property:
 %$$
 \begin{equation}
 \|u_n\|_{H^{-1}_\sigma}\le Cn^{-1}\|u_k\|_{H^{-1}},
 \end{equation}
 %$$
 in particular, $u_n\to0$ in $H^{-1}_\sigma(\Omega)$ and $u_n\to\Nx\Phi\ne0$, where $\Phi$ is a harmonic function, chosen almost arbitrarily. This shows that, in contrast to what is claimed in many sources, $H^{-s}(\Omega)$ is {\it not embedded} into $H^{-s}_\sigma(\Omega)$ for $s>\frac12$. Moreover, they cannot be realized as subspaces of any Hausdorff topological space. In particular $H^{-1}_\sigma(\Omega)$ is not embedded into $D'(\Omega)$.  The situation here is somehow similar to the case of the space $H^{-1}_N(\Omega):=[H^1(\Omega)]^*$ arising under the study of the Laplacian with Neumann boundary conditions. This space  structurally is a product $H^{-1}(\Omega)\oplus H^{-1/2}(\partial\Omega)$, but there are no canonic splitting of a functional from $H^{-1}_N(\Omega)$ into this sum. We also note that, analogously to the case of the Laplacian, the space $H^s_A$ is not a subspace of $D'_\sigma(\Omega)$ for $s<-\frac32$.
 \end{remark}
 \begin{remark} We note that there are many ways to define the fractional
 Laplacian (see \cite{DL} for a survey) and they generate many ways how to
 define the fractional Stokes operator. The way what we use in the paper is
 often referred as a spectral fractional Stokes operator (we use also the
 so-called regional fractional Laplacian in Appendix to reduce the
 fractional Cwikel--Lieb--Rosenblum inequality to a bounded domain.
 An alternative choice for the problem considered would be the operator
 $\bar A^s:=\Pi(-\Dx)^{s}$, where $(-\Delta)^s$, say, spectral fractional
 Laplacian. Under this choice the alternative fractional Navier--Stokes--Voigt
 equation reads
 %$$
 \begin{equation}\label{1.eq-alt}
 (1+\alpha^s(-\Dx)^s\Dt u+(u,\Nx)u+\Nx p=\Dx u+h,\ \divv u=0,\ \ u\big|_{t=0}=u_0,\ u\big|_{\partial\Omega}=0.
 \end{equation}
 %$$
 Unfortunately, in general, $\bar A^s\ne A^s$, so this equation
 {\it differs} from the one, considered in our paper. Our choice motivated
 by the fact that $A^s$ and $A$ have a common base of eigenvectors, in
 contrast to the operators $\bar A^s$ and $\bar A$ and this  simplifies
 the technique related with the upper bounds for the dimension of the
 attractor, which we use. However, we believe that this difference is
 not crucial and the technique is applicable to equation \eqref{1.eq-alt}
 up to some extra technicalities.
 \end{remark}

 \section{Well-posedness, dissipativity,  and attractors}\label{s2}
In this section, we start to consider the fractional Navier--Stokes--Voigt
system. Since the results of this section looks known and the technique is
standard, we skip many details (except of the proof of the energy equality)
and give only a brief exposition. As usual, we first identify what is a
weak solution of our equation.
\begin{definition}\label{2.sol} Let $s\in[\frac12,1]$ and let
$h\in H^{-1}_\sigma(\Omega)$ and $u_0\in\Cal H$. Then
%$$
\begin{equation}\label{2.reg}
u\in C_w([0,T], V^s)\cap L^2((0,T),V^1)
\end{equation}
%$$
(where $u\in C_w([0,T],V^s)$ means that $u(t)$ is continuous in time
in the weak topology of $V^s$, so the boundary condition is well-defined)
is a weak solution of \eqref{0.eqmain} if, for every test function
$\varphi\in C_0^\infty([0,T],D_\sigma(\Omega))$, we have the identity
%$$
\begin{multline}\label{2.dist}
-\int_0^T\<(1+\alpha^sA^s)u(t),\Dt \varphi(t)\>_\sigma\,dt+
\nu\int_0^T(\Nx
u(t),\Nx\varphi(t))\,dt+\\+\int_0^T((u(t),\Nx)u(t),\varphi(t))\,dt=
\int_0^T\<h,\varphi(t)\>_\sigma\,dt.
\end{multline}
%$$
The first term in this equality is well-defined since \color{black}
$u(t)\in V^s$ implies $A^su\in V^{-s}\subset D'_\sigma(\Omega)$
($s\in[\frac12,1]$ and, in particular, $s<\frac32$).

\color{black} The term with nonlinearity is also well-defined since
$(u,\Nx)u\in L^1(0,T; L^{\color{black}3/2\color{black}}(\Omega))$ and the rest terms are obvious.
\end{definition}
 Our next step is to check the energy equality.
 \begin{lemma}\label{Lem2.en} Let $u(t)$ be a weak solution of \eqref{0.eqmain}. Then  the function
 $$
 t\to \|u(t)\|_{L^2}^2+\alpha^s\|u(t)\|_{V^s}^2
 $$
 is absolutely continuous and the following energy identity holds:
 %$$
 \begin{equation}\label{2.en}
 \frac12\frac d{dt}\(\|u(t)\|^2_{L^2}+\alpha^s\|u(t)\|_{V^s}\)
 +\nu\|\Nx u(t)\|^2_{L^2}=\<h,u(t)\>_\sigma
 \end{equation}
 %$$
 almost everywhere in time.
 \end{lemma}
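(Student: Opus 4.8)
The plan is to conjugate the equation by the square root of the Voigt operator and then invoke the classical Lions--Magenes lemma. Put $\Lambda:=(1+\al^s A^s)^{1/2}$, a positive self-adjoint operator with the same eigenbasis as $A$, and set $v:=\Lambda u$. Since $\Lambda^2=1+\al^s A^s$, we have the pointwise identity $\|u(t)\|_{L^2}^2+\al^s\|u(t)\|_{V^s}^2=((1+\al^s A^s)u(t),u(t))=\|v(t)\|_{L^2}^2$, so it is enough to show that $t\mapsto\|v(t)\|^2_{L^2}$ is absolutely continuous with the correct derivative. As $\Lambda$ acts like $A^{s/2}$ on the high modes, it maps $V^1$ into $V^{1-s}$ and $V^s$ into $\Cal H$; hence the regularity \eqref{2.reg} of the weak solution yields $v\in L^2((0,T),V^{1-s})\cap L^\infty((0,T),\Cal H)$.

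I would then read the time derivative of $v$ off the weak formulation. Identity \eqref{2.dist} states exactly that $\pt\big((1+\al^s A^s)u\big)=h-\nu Au-N(u)$ in $\Cal D'\big((0,T),V^{-1}\big)$, where $N(u):=\Pi\big((u,\Nx)u\big)$. The linear terms are harmless: $h\in V^{-1}$ is constant in $t$, and $u\in L^2((0,T),V^1)$ gives $Au\in L^2((0,T),V^{-1})$. The only delicate point --- and the one I expect to be the main obstacle --- is the bound $N(u)\in L^2((0,T),V^{-1})$. Estimating $\|N(u)\|_{V^{-1}}\le C\|u\|^2_{L^4}$ by integration by parts and using the three-dimensional embedding $V^{3/4}\hookrightarrow L^4$, the claim reduces to $\|u\|_{V^{3/4}}\in L^4(0,T)$. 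Interpolating $V^{3/4}=[V^s,V^1]_\theta$ with $\theta=\tfrac{3/4-s}{1-s}$ and combining $u\in L^\infty((0,T),V^s)$ with $u\in L^2((0,T),V^1)$ (for $s\le\tfrac34$; the case $s\ge\tfrac34$ being immediate from $V^s\hookrightarrow V^{3/4}$), this integrability holds precisely when $\theta\le\tfrac12$, i.e.\ when $s\ge\tfrac12$, which explains the standing restriction on $s$. Applying $\Lambda^{-1}$, bounded from $V^{-1}$ to $V^{s-1}$, then gives $\pt v=\Lambda^{-1}\big(h-\nu Au-N(u)\big)\in L^2((0,T),V^{s-1})$.

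At this point $v\in L^2((0,T),V^{1-s})$ and $\pt v\in L^2((0,T),V^{s-1})$ in the Gelfand triple $V^{1-s}\hookrightarrow\Cal H\hookrightarrow V^{s-1}$ (here $1-s\in[0,\tfrac12]$, so both embeddings are dense and continuous). The Lions--Magenes lemma (see, e.g., \cite{tem,Lions}) yields $v\in C([0,T],\Cal H)$, the absolute continuity of $t\mapsto\|v(t)\|^2_{L^2}$, and the chain rule $\tfrac12\tfrac{d}{dt}\|v\|^2_{L^2}=\<\pt v,v\>$ for a.e.\ $t$. To finish I would undo the conjugation: expanding in the eigenbasis of $A$ shows $\<\pt v,v\>=\<\Lambda^{-1}(h-\nu Au-N(u)),\Lambda u\>=\<h,u\>_\sigma-\nu\|\Nx u\|^2_{L^2}-((u,\Nx)u,u)$, and the trilinear term vanishes since $u$ is divergence free with $u|_{\partial\Om}=0$, whence $((u,\Nx)u,u)=\tfrac12\int_\Om(u\cdot\Nx)|u|^2\,dx=0$ for a.e.\ $t$. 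Inserting this into the chain rule gives precisely \eqref{2.en}.
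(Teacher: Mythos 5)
Your proposal is correct and takes essentially the same route as the paper: both conjugate by $\Lambda=(1+\alpha^s A^s)^{1/2}$ to place the problem in the Gelfand triple $V^{1-s}\subset\mathcal{H}\subset V^{s-1}$ and then invoke the Lions--Magenes lemma (the paper cites \cite[Lemma III.1.2]{tem}) after verifying the key fact that $\Pi\bigl((u,\nabla)u\bigr)\in L^2((0,T),V^{-1})$, which is exactly where the restriction $s\ge\frac12$ enters in both arguments. The only cosmetic difference is the derivation of this nonlinear bound: you use $\|\Pi((u,\nabla)u)\|_{V^{-1}}\le C\|u\|_{L^4}^2$ with $V^{3/4}\hookrightarrow L^4$ and interpolation between $V^s$ and $V^1$, while the paper uses $V^s\subset L^{6/(3-2s)}$, H\"older, and the dual embedding $L^q\subset V^{-\theta}$ to get the slightly stronger conclusion $(u,\nabla)u\in L^2((0,T),V^{s-3/2})\subset L^2((0,T),V^{-1})$.
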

 \begin{proof}To get the result, we need to obtain some estimates for the inertial term. Indeed, using the embeddings $V^s \subset L^{6/(3-2s)}$ and $V^1\in L^6$ together with the regularity \eqref{2.reg} and the H\"older inequality, we arrive at
 $$
 (u,\Nx)u\in L^2([0,T],L^{\color{black}\frac3{3-s}\color{black}}).
 $$
 We now use that $s\ge\frac12$. Then using that $L^q\subset V^{-\theta}$
 if $\frac1q=\frac12+\frac{\theta}3$, we finally get
 $$
 (u,\Nx)u\in L^2((0,T),V^{\color{black}s-\frac32\color{black}})
 \subset L^2((0,T),V^{-1}).
 $$
 The rest is standard. From \eqref{2.dist}, we conclude by
 continuity that the distributional derivative
 $\Dt(1+\alpha^sA^s)u$ can be extended by continuity to the
 linear functional on $L^2((0,T),V^{-1})$ and consequently
 $$
 (1+\alpha^sA^s)\Dt u\in L^2((0,T),V^{-1}) \
 \text{ and }\ (1+\alpha^sA^s)^{1/2}\Dt u\in V^{-1+s}.
 $$
 Moreover, from \eqref{2.reg}, we know that
 $(1+\alpha^sA^s)^{1/2}u(t)\in L^2(V^{1-s})$ and \cite[Lemma III.1.2]{tem}
  gives us the desired absolute continuity as well as the equality
 %$$
 \begin{multline}\label{2.tem}
 \frac12\frac d{dt}\(\|u(t)\|_{L^2}^2+\alpha^s\|u(t)\|^2_{V^s}\)=\<(1+\alpha^sA^s)^{1/2}\Dt u(t),(1+\alpha^sA^s)^{1/2}u(t)\>_\sigma=\\=\<(1+\alpha^sA^s)\Dt u(t),u(t)\>_\sigma
 \end{multline}
 %$$
 Moreover,  the first term in  \eqref{2.dist} can be rewritten
 as $\int_0^T\<(1+\alpha^sA^s)\Dt u(t),\varphi(t)\>_\sigma\,dt$
 and after that the class of test functions can be extended by
 continuity to $L^2((0,T), V^{1})$ which contains the solution
 $u(t)$. Therefore, using this modified identity \eqref{2.dist}
 with $\varphi=u$ and using \eqref{2.tem}, we have
 $$
 \frac12\(\|u(t)\|^2_{L^2}+\alpha^s\|u(t)\|^2_{V^s}\)\big|_{t=0}^{t=T}+
 \nu\int_0^T\|\Nx u(t)\|^2_{L^2}dt=\int_0^T\<h,u(t)\>_\sigma\, dt.
 $$
 Finally, replacing the time interval $[0,T]$ by $[\tau,T]$ for any $\tau\in[0,T)$, we get the analogue of the last equality with the integration limits $\tau$ and $t$ and this is nothing more than the equivalent integral form of the energy identity. This finishes the proof of the lemma.
 \end{proof}
 The standard arguments based on the energy equality allow to verify that any weak solution $u(t)$ is strongly continuous in time
 $$
 u\in C([0,T],V^s).
 $$
 Moreover, due to the regularity estimates of  Lemma \ref{Lem2.en}, the
 equality \eqref{2.dist} can be rewritten in the following equivalent form
 %$$
 \begin{equation}\label{2.st}
 (1+\alpha^sA^s)\Dt u+Au+\Pi(u,\Nx)u=h
 \end{equation}
 %$$
 and the equation is understood as the equality in the space
 $L^2(0,T;H^{-1}_\sigma(\Omega))$.
\par
The next theorem gives the well-posedness and dissipativity for
equation \eqref{0.eqmain}
\begin{theorem} \label{Th2.main1} Let $s\in[\frac12,1]$ and
$h\in H^{-1}_\sigma(\Omega)$. Then, for any $u_0\in V^s$, there is a
unique weak solution $u(t)$ of problem \eqref{0.eqmain} defined for
all $t\in\R_+$. Moreover, this solution satisfies the following
dissipative estimate:
 %$$
 \begin{equation}\label{2.dis}
 \|u(t)\|_{V^s}^2+\int_0^te^{-\beta(t-s)}\|\Nx u(s)\|^2_{L^2}\,ds\le C(\|u_0\|^2_{V^s}e^{-\beta t}+\|h\|_{V^{-1}}^2),
 \end{equation}
 %$$
 where the positive constants $\beta$ and $C$ are independent of $t$ and $u_0$.
 \end{theorem}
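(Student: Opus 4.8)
The plan is to establish the three claims—existence, uniqueness, and the dissipative bound—in that order, treating existence and dissipativity as essentially standard and reserving the real work for uniqueness, which (in contrast to the genuine 3D Navier--Stokes case) is available here thanks to the Voigt term together with the restriction $s\ge\frac12$. For \emph{existence} I would use the Faedo--Galerkin method with the basis $\{e_n\}$ of eigenvectors of $A$. The decisive structural point is that, since $A^s$ is the \emph{spectral} fractional power, the $e_n$ are simultaneously eigenvectors of $A^s$, so projecting \eqref{2.st} onto $P_N\Cal H:=\mathrm{span}\{e_1,\dots,e_N\}$ yields a system of ODEs for $u_N=\sum_{n=1}^N g_n(t)e_n$ in which $(1+\al^sA^s)$ acts diagonally and is boundedly invertible; local solvability is immediate from the local Lipschitz continuity of the quadratic nonlinearity. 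Testing the Galerkin system with $u_N$ reproduces the energy identity of Lemma \ref{Lem2.en} exactly, which furnishes uniform bounds in $L^\infty(0,T;V^s)\cap L^2(0,T;V^1)$ and hence global existence of $u_N$.

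The Voigt term then supplies extra time regularity: inverting $(1+\al^sA^s)$ gains $2s$ derivatives, so from $\Pi(u_N,\Nx)u_N\in L^2(0,T;V^{-1})$ (obtained as in the proof of Lemma \ref{Lem2.en}), $\nu Au_N\in L^2(0,T;V^{-1})$ and $h\in V^{-1}$ one gets $\Dt u_N$ bounded in $L^2(0,T;V^{2s-1})\subset L^2(0,T;\Cal H)$ since $s\ge\frac12$. By the Aubin--Lions lemma (with $V^1\hookrightarrow\hookrightarrow\Cal H\hookrightarrow V^{-1}$) a subsequence converges strongly in $L^2(0,T;\Cal H)$, which suffices to pass to the limit in the nonlinear term written as $-\int_0^T((u_N,\Nx)\varphi,u_N)\,dt$, while the linear terms pass to the limit by weak convergence; the limit is the desired weak solution.

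For the \emph{dissipative estimate} I would work directly from the energy identity \eqref{2.en}. Writing $E(t):=\|u(t)\|_{L^2}^2+\al^s\|u(t)\|_{V^s}^2$ and bounding $\<h,u\>_\sigma\le\frac\nu2\|\Nx u\|_{L^2}^2+\frac1{2\nu}\|h\|_{V^{-1}}^2$, one half of the viscous term absorbs the forcing while the other half is converted into decay via the Poincar\'e-type inequalities $\|u\|_{L^2}^2\le\la_1^{-1}\|\Nx u\|_{L^2}^2$ and $\|u\|_{V^s}^2\le\la_1^{s-1}\|\Nx u\|_{L^2}^2$ (the latter valid because $s\le1$), which give $\nu\|\Nx u\|_{L^2}^2\ge\beta E$ with the explicit rate $\beta:=\nu(\la_1^{-1}+\al^s\la_1^{s-1})^{-1}$. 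This yields $E'+\beta E\le\nu^{-1}\|h\|_{V^{-1}}^2$, and Gronwall's lemma produces the decay of $\|u(t)\|_{V^s}^2$; multiplying the surviving inequality $\nu\|\Nx u\|_{L^2}^2\le\nu^{-1}\|h\|_{V^{-1}}^2-E'$ by $e^{-\beta(t-\tau)}$ and integrating in $\tau$ (with one integration by parts in the $E'$ term) recovers the weighted integral of the dissipation. The constants $\beta$ and $C$ depend only on $\nu,\al,\la_1,s$, as required.

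The \emph{uniqueness} is the main point. Given two solutions $u_1,u_2$ with the same data, set $w:=u_1-u_2$; it solves $(1+\al^sA^s)\Dt w+\nu Aw+\Pi[(w,\Nx)u_1+(u_2,\Nx)w]=0$. Testing with $w$ (legitimate since $w\in L^2(0,T;V^1)$ and $(1+\al^sA^s)\Dt w\in L^2(0,T;V^{-1})$) and using $((u_2,\Nx)w,w)=0$ gives, with $F(t):=\|w\|_{L^2}^2+\al^s\|w\|_{V^s}^2$,
\begin{equation*}
\tfrac12 F'(t)+\nu\|\Nx w\|_{L^2}^2=-((w,\Nx)u_1,w)=((w,\Nx)w,u_1).
\end{equation*}
The crux is to bound the right-hand side by the dissipation plus an integrable multiple of $F$. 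Using H\"older with exponents $(3,2,6)$, the embedding $V^s\subset L^3$ (which holds precisely because $s\ge\frac12$) and $V^1\subset L^6$, I obtain $|((w,\Nx)w,u_1)|\le C\|w\|_{V^s}\|\Nx w\|_{L^2}\|u_1\|_{V^1}$; Young's inequality then absorbs $\frac\nu2\|\Nx w\|_{L^2}^2$ on the left, leaving
\begin{equation*}
\tfrac12 F'(t)\le\frac{C^2}{2\nu}\|w\|_{V^s}^2\|u_1\|_{V^1}^2\le\frac{C^2}{2\nu\al^s}\|u_1\|_{V^1}^2\,F(t).
\end{equation*}
Since $u_1\in L^2(0,T;V^1)$ the coefficient lies in $L^1(0,T)$, so Gronwall's lemma forces $F\equiv0$ from $F(0)=0$. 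This is exactly the step where both the hypothesis $s\ge\frac12$ (to control $\|w\|_{L^3}$ by $\|w\|_{V^s}$ without spending the dissipation) and the Voigt regularization (the factor $\al^s$ making $\|w\|_{V^s}^2\le\al^{-s}F$ available) are indispensable, and it is precisely this mechanism that is missing for the unregularized 3D system; I expect this nonlinear estimate to be the only nonroutine part of the proof.
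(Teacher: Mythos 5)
Your proposal is correct and follows essentially the same route as the paper: the dissipative estimate comes from the energy identity plus Poincar\'e and Young (your explicit $\beta=\nu(\lambda_1^{-1}+\alpha^s\lambda_1^{s-1})^{-1}$ just makes the paper's implicit constant concrete), and your uniqueness argument is the paper's one up to cosmetic variants --- you split the nonlinearity as $(w,\nabla)u_1+(u_2,\nabla)w$ where the paper uses $(u_1,\nabla)v+(v,\nabla)u_2$, and you integrate by parts before applying H\"older, but both reduce to the same $L^3\times L^2\times L^6$ bound via $V^s\subset V^{1/2}\subset L^3$ (where $s\ge\frac12$ enters) and $V^1\subset L^6$, followed by absorption of the dissipation and Gronwall with the integrable coefficient $\|u_i\|^2_{V^1}\in L^1(0,T)$. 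The Galerkin details you supply (diagonality of $1+\alpha^s A^s$ in the Stokes eigenbasis, $\partial_t u_N$ bounded in $L^2(0,T;V^{2s-1})\subset L^2(0,T;\mathcal H)$ for $s\ge\frac12$, Aubin--Lions) are precisely the standard steps the paper explicitly leaves to the reader, so they constitute added detail rather than a different method.
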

 \begin{proof} Let us first prove uniqueness. Indeed, let $u_1,u_2$
 be two weak solutions and let  $v:=u_1-u_2$. Then, this function solves
 %$$
 \begin{equation}
 (1+\alpha^sA^s)\Dt v+\nu Av+\Pi (u_1,\Nx)v+\Pi(v,\Nx)u_2=0.
 \end{equation}
 %$$
 Writing down the energy identity for this equation
  (testing it with $v(t)$ in $V^{-1}$), we get
 %$$
 \begin{equation}
 \frac12\frac d{dt}\(\|v(t)\|^2+\alpha^s\|v(t)\|^2_{V^s}\)+
 \nu\|\Nx v(t)\|^2_{L^2}+((v(t),\Nx) u_2(t),v(t))=0.
 \end{equation}
 %$$
 Using the Poincar\'e inequality together with the
 Young inequality we conclude from here that
 %$$
 %\begin{multline*}
 $$
 \frac d{dt}\(\|v(t)\|^2_{L^2}+\alpha^s\|v(t)\|^2_{V^s}\)+
 \beta\(\|v(t)\|^2+\alpha^s\|v(t)\|^2_{V^s}\)
 %+\beta\|\Nx v(t)
 %\le\\
 \le C|((v(t),\Nx)u_2(t),v(t))|
 %+\|h\|^2_{V^{-1}}
 $$
 %  \end{multline*}
  %$$
  for some positive constants $C$ and $\beta$. Estimating the  term
  on the right-hand side as follows
  $$
  |((v,\Nx)u_2,v)|\le \|v\|_{L^3}\|v\|_{L^6}\|\Nx u_2\|_{L^2}\le \alpha^s\beta\|\Nx v\|^2_{L^2}+C\|\Nx u_2\|^2_{L^2}\|v\|^2_{V^{1/2}},
  $$
  where we have used the embeddings $V^1\subset L^6$ and $V^{1/2}\subset L^3$,
  and using that $V^{1/2}\subset V^s$ (since $s\ge\frac12$), we finally get
  $$
  \frac d{dt}\(\|v(t)\|^2_{L^2}+\alpha^s\|v(t)\|^2_{V^s}\)\le C\|\Nx u_2(t)\|^2_{L^2}\(\|v(t)\|^2_{L^2}+\alpha^s\|v(t)\|^2_{V^s}\)
  $$
  Integrating this inequality and using that $\|\Nx u_2(t)\|_{L^2}\in L^1(0,T)$, we end up with
  $$
  \(\|v(t)\|^2_{L^2}+\alpha^s\|v(t)\|^2_{V^s}\)\le C_T\(\|v(0)\|^2_{L^2}+\alpha^s\|v(0)\|^2_{V^s}\)
  $$
  for all $t\le T$. This gives uniqueness and even the Lipschitz
  continuity of the solution with respect to initial data.
  \par
  The existence of a solution can be proved in a standard way
  using  Galerkin approximations based on a priori estimate \eqref{2.dis},
 so we leave the proof to the reader and derive a priori estimate \eqref{2.dis} only.
  \par
  Let $u(t)$ be a solution of \eqref{0.eqmain}. Then, using the Poincar\'{e} and Young inequalities, we derive from the energy equality \eqref{2.en} that
  $$
  \frac d{dt}\(\|u(t)\|^2_{L^2}+\alpha^s\|u(t)\|^2_{V^s}\)+
  \beta\(\|u(t)\|^2_{L^2}+\alpha^s\|u(t)\|^2_{V^s}\)+\beta\|\Nx u(t)\|^2_{L^2}\le C\|h\|^2_{V^{-1}}
  $$
for some positive constants $\beta$ and $C$. Integrating this estimate, we get \eqref{2.en} and finish the proof of the theorem.
 \end{proof}
Thus, we have proved that, under the assumptions of Theorem \ref{Th2.main1} problem \eqref{0.eqmain} generates a locally Lipschitz continuous dissipative semigroup in the phase space $V^s$:
%$$
\begin{equation}\label{2.sem}
S(t):V^s\to V^s,\ \ S(t)u_0:=S(t).
\end{equation}
 %$$
The next Corollary is crucial for our study of the upper bounds for the dimension of the attractor.
\begin{corollary}\label{Cor2.est} Let the assumptions of Theorem \ref{Th2.main1} hold and let $u(t)$ be a solution of problem \eqref{0.eqmain}. Then, the following inequality is valid
%$$
\begin{equation}\label{2.mean}
\limsup_{T\to\infty}\frac1T\int_0^T\|\Nx u(t)\|^2_{L^2}\,dt\le \frac{\|h\|^2_{V^{-1}}}{\nu^2}.
\end{equation}
%$$
For the case when $h\in L^2(\Omega)$, the right-hand side should be replaced by $\frac{\|h\|^2_{L^2}}{\nu^2\lambda_1}$ where $\lambda_1$ is the first eigenvalue of the Stokes operator.
\end{corollary}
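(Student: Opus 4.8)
The estimate \eqref{2.mean} is an immediate consequence of the energy identity \eqref{2.en} established in Lemma \ref{Lem2.en}. Writing $E(t):=\|u(t)\|_{L^2}^2+\al^s\|u(t)\|_{V^s}^2$ for the (nonnegative) energy, the plan is to integrate \eqref{2.en} over $[0,T]$, control the forcing term by the dissipation, and then average in time. First I would integrate \eqref{2.en} from $0$ to $T$ to obtain
$$
\tfrac12\(E(T)-E(0)\)+\nu\int_0^T\|\Nx u(t)\|_{L^2}^2\,dt=\int_0^T\<h,u(t)\>_\sigma\,dt ,
$$
where integrability of $\|\Nx u\|_{L^2}^2$ on $[0,T]$ is guaranteed by the regularity \eqref{2.reg}.

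Next, since $A$ realizes an isometry between $V^1$ and $V^{-1}$, so that $\|u\|_{V^1}=\|\Nx u\|_{L^2}$, the duality pairing is bounded by $|\<h,u\>_\sigma|\le\|h\|_{V^{-1}}\|\Nx u\|_{L^2}$. The one point that needs a little care is to apply Young's inequality with the sharp split
$$
\|h\|_{V^{-1}}\|\Nx u\|_{L^2}\le\tfrac{\nu}{2}\|\Nx u\|_{L^2}^2+\tfrac1{2\nu}\|h\|_{V^{-1}}^2 ,
$$
which lets me absorb exactly one half of the dissipation term into the left-hand side. After this absorption and after discarding the nonnegative terminal energy $E(T)\ge0$, I arrive at
$$
\frac{\nu}{2}\int_0^T\|\Nx u(t)\|_{L^2}^2\,dt\le\frac{T}{2\nu}\|h\|_{V^{-1}}^2+\tfrac12 E(0).
$$
Dividing by $\tfrac{\nu}{2}T$ and passing to the limit $T\to\infty$ makes the initial-data contribution $E(0)/(\nu T)$ vanish, yielding \eqref{2.mean} with the sharp constant $\nu^{-2}$.

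For the refinement when $h\in L^2(\Omega)$, I would only need the elementary spectral bound
$$
\|h\|_{V^{-1}}^2=\sum_n\la_n^{-1}(h,e_n)^2\le\la_1^{-1}\sum_n(h,e_n)^2=\la_1^{-1}\|\Pi h\|_{\Cal H}^2\le\la_1^{-1}\|h\|_{L^2}^2 ,
$$
which follows from the diagonalization \eqref{1.HA} with $s=-1$ and the fact that the Leray projector $\Pi$ is a contraction on $L^2$. Substituting this into \eqref{2.mean} replaces $\|h\|_{V^{-1}}^2$ by $\|h\|_{L^2}^2/\la_1$, as claimed.

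There is no serious obstacle here: the argument is the classical Navier--Stokes enstrophy balance. The two points worth emphasizing are, first, that the Young constant must be chosen to absorb precisely half of the dissipation so that the factor $\nu^{-2}$ comes out sharp (a looser split would spoil the constant), and second, that the $\al^s$-weighted $V^s$-part of the energy enters only through the boundary values $E(0)$ and $E(T)$ and hence does not influence the averaged dissipation. In particular, the resulting bound is \emph{independent} of the regularization parameters $\al$ and $s$, which matches the classical estimate and will be essential for the dimension estimates in Section \ref{s3}.
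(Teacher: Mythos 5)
Your proof is correct and follows essentially the same route as the paper: bound $\<h,u\>_\sigma\le\|h\|_{V^{-1}}\|\Nx u\|_{L^2}$, apply Young's inequality with the split $\frac{\nu}{2}\|\Nx u\|_{L^2}^2+\frac{1}{2\nu}\|h\|_{V^{-1}}^2$ to absorb half the dissipation, integrate the energy identity, discard the nonnegative terminal energy, and let $T\to\infty$ so the initial-data term vanishes; the paper merely phrases this as a differential inequality before integrating, and likewise invokes the Poincar\'e inequality $\|h\|_{V^{-1}}^2\le\lambda_1^{-1}\|h\|_{L^2}^2$ for the $h\in L^2$ refinement, which your spectral computation justifies in slightly more detail.
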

\begin{proof} Using that
$$
\<h,u\>_\sigma\le\|h\|_{V^{-1}}\|u\|_{V^1}\le\frac1{2\nu}\|h\|^2_{V^{-1}}+\frac\nu2\|\Nx v\|^2_{L^2}
$$
we derive from energy inequality \eqref{2.en} that
$$
\frac d{dt}\(\|v(t)\|^2_{L^2}+\alpha^s\|u(s)\|_{V^s}^2\)+\frac\nu2\|\Nx u(t)\|^2_{L^2}\le \frac{\|h\|^2_{V^{-1}}}{2\nu}.
$$
Integrating this inequality with respect to $t\in[0,T]$, we arrive at
$$
\frac\nu{2T}\int_0^T\|\Nx u(t)\|^2_{L^2}\,dt\le \frac1{2\nu}\|h\|^2_{L^2}+\frac1T\(\|u(0)\|^2_{L^2}+\alpha^s\|u(0)\|_{V^s}^2\).
$$
Since the second term in the right-hand side tends to
zero as $T\to\infty$, this estimate gives the desired estimate
\eqref{2.mean}. If $h\in L^2$, the Poincar\'{e} inequality gives
$\|h\|_{V^{-1}}^2\le\lambda_1^{-1}\|h\|^2_{L^2}$ and the corollary is proved.
\end{proof}
We now turn to attractors, We recall that a set $\Cal A$
is a (global) attractor for the semigroup $S(t)$ acting in a Banach space $V^s$ if
\begin{itemize}
\item{ $\Cal A$ is a compact subset of $V^s$.}
\item{$\Cal A$ is strictly invariant, i.e., $S(t)\Cal A=\Cal A$ for all $t\ge0$.}
\item{$\Cal A$ attracts the images of all bounded sets of $V^s$ as $t\to\infty$, i.e., for every bounded set $B\subset V^s$ and every neighbourhood $\Cal O(\Cal A)$ of the set $\Cal A$, there exists $T=T(B,\Cal O)$ such that
    $$
    S(t)B\subset\Cal O(\Cal A)
    $$
    for all $t\ge T$.}
    \end{itemize}
The main result of this section is the following theorem.
\begin{theorem}\label{Th2.main2} Let the assumptions of Theorem \ref{Th2.main1} hold. Then the solution semigroup $S(t)$ associated with problem \eqref{0.eqmain} possesses a global attractor $\Cal A$ in the phase space $V^s$ which has the following structure:
$$
\Cal A=\Cal K\big|_{t=0},
$$
where $\Cal K\subset C_b(\R,V^s)$ is the set of all bounded complete (i.e., defined for all $t\in\R$) solutions of equation \eqref{0.eqmain}. We refer to this attractor as an attractor of equation \eqref{0.eqmain}.
\end{theorem}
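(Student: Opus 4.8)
The plan is to combine the dissipative estimate \eqref{2.dis} of Theorem \ref{Th2.main1}, which already furnishes a bounded absorbing ball $\Cal B\subset V^s$, with an asymptotic compactness property of the semigroup $S(t)$, and then to invoke the general theory of dissipative systems \cite{tem}. The essential obstacle is that, unlike for the Navier--Stokes equations, the Voigt term $(1+\alpha^sA^s)\Dt u$ suppresses any instantaneous parabolic smoothing (for $s=1$ the linear generator $\nu(1+\alpha^sA^s)^{-1}A$ even has bounded spectrum), so $S(t)$ is not compact for any fixed $t$ and the compactness must be produced asymptotically. I would do this through a splitting $S(t)=Y(t)+Z(t)$ with $Y(t)\to0$ uniformly on $\Cal B$ and $Z(t)$ mapping $\Cal B$ into a fixed precompact subset of $V^s$. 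Writing $\bar v:=\nu^{-1}A^{-1}h\in V^1$ for the stationary solution of the linear part and decomposing a trajectory $u(t)=S(t)u_0$ as $u=v+w$, where $v$ solves $(1+\alpha^sA^s)\Dt v+\nu Av=h$ with $v(0)=u_0$ and $w$ solves $(1+\alpha^sA^s)\Dt w+\nu Aw=-\Pi(u,\Nx)u$ with $w(0)=0$, I set $Y(t)u_0:=v(t)-\bar v$ and $Z(t)u_0:=\bar v+w(t)$.

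For the decaying part, $v-\bar v$ solves the homogeneous Voigt equation, and its energy identity $\frac12\frac d{dt}E+\nu\|\Nx(v-\bar v)\|^2_{L^2}=0$, where $E:=\|v-\bar v\|^2_{L^2}+\alpha^s\|v-\bar v\|^2_{V^s}$, together with the elementary inequalities $\|\Nx\phi\|^2_{L^2}\ge\lambda_1^{1-s}\|\phi\|^2_{V^s}$ and $E\le(\lambda_1^{-s}+\alpha^s)\|v-\bar v\|^2_{V^s}$, yields $\frac d{dt}E+\delta E\le0$ with $\delta>0$. Hence $\|Y(t)u_0\|_{V^s}\le Ce^{-\delta t/2}\|u_0-\bar v\|_{V^s}\to0$ uniformly for $u_0\in\Cal B$, which is exactly the decay required of the non-compact part.

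The heart of the matter is the compactness of $Z(t)$, i.e. a uniform bound for $w$ in a space compactly embedded into $V^s$; this is the step I expect to be the main obstacle. The key is that $(1+\alpha^sA^s)^{-1}$ regularizes by $2s$ derivatives, while, by the computation in Lemma \ref{Lem2.en} and Corollary \ref{Cor2.est}, the nonlinearity obeys $\|\Pi(u,\Nx)u\|_{V^{s-3/2}}\le C\|u\|_{V^s}\|u\|_{V^1}$, so that $\int_t^{t+1}\|\Pi(u,\Nx)u\|^2_{V^{s-3/2}}\,d\tau$ is uniformly bounded on $\Cal B$. Testing the $w$-equation with $A^{s-1/2}w$ gives
$$
\tfrac12\tfrac d{dt}\(\|w\|^2_{V^{s-1/2}}+\alpha^s\|w\|^2_{V^{2s-1/2}}\)+\nu\|w\|^2_{V^{s+1/2}}=-\<\Pi(u,\Nx)u,A^{s-1/2}w\>,
$$
whose right-hand side is bounded by $\|\Pi(u,\Nx)u\|_{V^{s-3/2}}\|w\|_{V^{s+1/2}}$; absorbing the last factor into $\tfrac\nu2\|w\|^2_{V^{s+1/2}}$ leaves a dissipative Gronwall inequality (the dissipation controls the energy since $V^{s+1/2}\subset V^{2s-1/2}$ for $s\le1$) whose integration against the uniformly $L^2_{loc}$-in-time forcing produces $\sup_{t\ge0}\|w(t)\|_{V^{2s-1/2}}\le C$ uniformly for $u_0\in\Cal B$. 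For $s\in(\tfrac12,1]$ the embedding $V^{2s-1/2}\hookrightarrow V^s$ is compact, so $Z(t)\Cal B$ lies in a fixed precompact subset of $V^s$; the borderline $s=\tfrac12$ is covered by the same estimate read at the level $w\in L^2_{loc}(V^1)$ together with the Aubin--Lions lemma (using $\Dt w\in L^2_{loc}(\Cal H)$ and the compact embedding $V^1\hookrightarrow V^{1/2}$). This establishes the asymptotic compactness of $S(t)$.

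With a bounded absorbing set and asymptotic compactness in hand, the standard theory \cite{tem} yields the global attractor $\Cal A$ as the $\omega$-limit set of $\Cal B$. The representation $\Cal A=\Cal K\big|_{t=0}$ is then the usual characterization of the attractor: the inclusion $\Cal A\subseteq\Cal K\big|_{t=0}$ follows from the strict invariance $S(t)\Cal A=\Cal A$, since every point of $\Cal A$ lies on a complete trajectory contained in the bounded set $\Cal A$; conversely, any bounded complete trajectory is attracted by $\Cal A$ and, being invariant, must be contained in it, giving $\Cal K\big|_{t=0}\subseteq\Cal A$.
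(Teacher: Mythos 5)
Your proposal takes the route that the paper deliberately avoids: the paper proves asymptotic compactness by the energy method (extract a subsequence $S(t_n)u_{0,n}\rightharpoonup\xi$ weakly in $V^s$, pass to the limit in integrated versions of the energy equality \eqref{2.en}, and upgrade weak to strong convergence through convergence of the norms $\|S(t_n)u_{0,n}\|_{V^s}\to\|\xi\|_{V^s}$), precisely because, as the authors remark after the theorem, this treats $s<1$ and $s=1$ in a unified way and requires no smoothing. Your splitting $S(t)=Y(t)+Z(t)$, with the linear Voigt flow carrying the initial data and $w$ carrying the nonlinearity, is the alternative route the paper's remark alludes to, and for $s\in(\frac12,1]$ your execution is sound: the exponential decay of $Y(t)$, the bound $\|\Pi(u,\Nx)u\|_{V^{s-3/2}}\le C\|u\|_{V^s}\|u\|_{V^1}$ (consistent with the computation in Lemma \ref{Lem2.en}), the test of the $w$-equation with $A^{s-1/2}w$ (to be justified via Galerkin approximations, as usual), and the resulting uniform bound of $w$ in $V^{2s-1/2}$, which embeds compactly into $V^s$ exactly when $s>\frac12$, are all correct; your derivation of $\Cal A=\Cal K\big|_{t=0}$ from strict invariance plus attraction is the standard one. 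One incidental inaccuracy: your claim that $S(t)$ is never compact is true only for $s=1$; for $s<1$ the decay rates $\nu\lambda_k/(1+\alpha^s\lambda_k^s)\to\infty$ of the linear semigroup make $S(t)$ compact for every $t>0$, which is in fact the other option the paper mentions. This does not affect your argument.

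The genuine gap is the borderline case $s=\frac12$, which your theorem must cover since Theorem \ref{Th2.main1} assumes $s\in[\frac12,1]$. There your energy estimate gives $w$ bounded exactly in $V^{2s-1/2}=V^{1/2}$, i.e.\ no gain over the phase space, and the proposed Aubin--Lions patch does not close the argument: from $w$ bounded in $L^2_{loc}(V^1)$ and $\Dt w$ bounded in $L^2_{loc}(\Cal H)$ one obtains precompactness of trajectories in $L^2_{loc}(V^{1/2})$ and, via the $C^{1/2}$-in-time equicontinuity into $\Cal H$, fixed-time precompactness only in $V^\theta$ with $\theta<\frac12$ --- whereas asymptotic compactness requires precompactness of the evaluations $Z(t_n)u_{0,n}=\bar v+w_n(t_n)$ at the endpoint times in $V^{1/2}$ itself. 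Since $\|w\|_{V^1}$ is controlled only in $L^2$ in time, there is no equicontinuity into $V^{1/2}$, and highest-order oscillations of $w_n(t_n)$ in the $V^{1/2}$-norm are not excluded. Straightforward repairs by interpolation (e.g.\ $u\in L^4_{loc}(V^{3/4})$ improves the forcing to $L^{4/3}_{loc}(V^{-3/4})$, suggesting a test with $A^{1/4}w$) founder on time integrability: the Young step would need $\|\Pi(u,\Nx)u\|^2_{V^{-3/4}}\in L^1_{loc}$ in time, which is not available. The cleanest fix is to treat $s=\frac12$ by the paper's energy method (weak convergence combined with the energy identity of Lemma \ref{Lem2.en}), keeping your splitting for $s\in(\frac12,1]$.
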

\begin{proof}[Idea of the proof] According to
the general theory of attractors, see \cite{BaVi,Tem,Zelik1}
and references therein, we need to verify the following properties
(a) dissipativity, (b) continuity of $S(t)$ with respect to the
initial data, (c) asymptotic compactness. The property (c) means
that every sequence
$\{S(t_n)u_{0,n}\}_{n=1}^\infty$, such that $u_{0,n}$ is bounded
in $V^s$ and $t_n\to\infty$, is precompact in $V^s$.
\par
In our case property (a) follows from the dissipative estimate
\eqref{2.dis} and property (b) is actually verified in the
uniqueness part proof of Theorem \ref{Th2.main1}, so we only
need to verify asymptotic compactness.
\par
We use the so-called energy method to do so, see
\cite{ball,Tem,Rosa,Zelik1} and references therein for more
details. Namely, we first extract a subsequence
(which we also denote by $S(t_n)u_{0,n}$ which converges weakly
to some $\xi\in\Cal A_w$, where $\Cal A_w$ is the attractor of
$S(t)$ in a weak topology of $V^s$ (it is possible to do due to the
Banach--Alaoglu theorem since the sequence is bounded in the reflexive space $V^s$).
After that we pass to the limit in some integrated versions of energy equality
\eqref{2.en} and establish the convergence of the norms $\|S(t_n)u_{0,n}\|_{V^s}$
to $\|\xi\|_{V^s}$ which gives us the desired asymptotic compactness.
The structure formula is also a corollary of the above mentioned theorem
on the attractor's existence and the theorem is proved.
\end{proof}
\begin{remark} It is not difficult to prove that the attractor $\Cal A$
is more regular if the function $h$ is. In particular, it is $C^\infty$-smooth
if $h$ is $C^\infty$. The existence of an attractor can be proved by many ways,
in particular, establishing the compactness of the semigroup $S(t)$
(in the case $s<1$ or splitting $S(t)$ on a decaying and compact components
when $s=1$). Our choice of the energy method is motivated by two reasons:
(1) it allows to treat the cases of $s<1$ and $s=1$ in a unified way and
(2) it does not require further regularity of the function $h$.
\end{remark}
\begin{remark}\label{Rem-saba} It follows from Corollary \ref{Cor2.est} that the following limit exists and bounded from above by
%$$
\begin{equation}\label{2.lim}
\lim_{t\to\infty}\frac1t\sup_{u\in\Cal K}\int_0^t\|\Nx u(s)\|^2_{L^2}\,ds\le \frac{\|h\|^2_{V^{-1}}}{\nu^2}.
\end{equation}
%$$
Indeed, we claim that the function $t\to Q(t):=\sup_{u\in\Cal K}\int_0^t\|\Nx u(s)\|^2_{L^2}\,ds$ is subadditive. Indeed,
%$$
\begin{multline*}
Q(t+\tau)=\sup_{u\in\Cal K}\bigg\{\int_0^t\|\Nx u(s)\|^2_{L^2}\,ds+\int_t^{t+\tau}\|\Nx u(s)\|^2_{L^2}\,ds\bigg\}\le\\\le \sup_{u\in\Cal K}\int_0^t\|\Nx u(s)\|^2_{L^2}\,ds+\sup_{u\in\Cal K}\int_0^\tau\|\Nx u(t+s)\|^2_{L^2}\,ds\le Q(t)+Q(\tau),
\end{multline*}
%$$
where we have implicitly used the invariance of $\Cal K$. Thus, the existence of the limit is proved and the estimates follows from Corollary \ref{Cor2.est}.
\end{remark}

\section{Upper bounds for the dimension of the  attractor}\label{s3}
In this section, we obtain a number of estimates for the
dimension of the attractor and study their dependence on
the exponent $s$ and the dimensionless parameters $G$ and $G_1$.
\par
We start with reminding the definition of a fractal (box-counting, Minkowski)
dimension.
\begin{definition} Let $\Cal A$ be a compact set in a metric space $V^s$.
Then, by the Hausdorff criterion, for any $\eb>0$, $\Cal A$ can be covered
by finitely many $\eb$-balls. Let $N_\eb(\Cal A,V^s)$ be the minimal number
of such balls. Then the (upper) fractal dimension of $\Cal A$ is defined as
follows:
$$
\dim_f(\Cal A, V^s):=\limsup_{\eb\to0}\frac{\log_2N_\eb(\Cal A,V^s)}{\log_2\frac1\eb}.
$$
\end{definition}
 We will use the volume contraction method combined with various spectral inequalities collected in Appendix A. Recall that the key tool for the volume contraction method is the following theorem, see \cite{blin,Ch-I,hunt,Tem}.
\begin{theorem}Let $H$ be a Hilbert space and let a semigroup $S(t): H\to H$ possesses a global attractor $\Cal A$ in $H$. Assume also that $S(t)$ is uniformly (quasi)differentiable with respect to the inial data on the attractor and that the corresponding quasi-differential is continuous on it. Then, the fractal dimension of the attractor is less than $n$ if $S(t)$ contracts infinitesimal $n$-dimensional volumes on $\Cal A$.
\end{theorem}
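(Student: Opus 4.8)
The statement is the volume--contraction (Douady--Oesterl\'e / Constantin--Foias--Temam) principle in its fractal--dimension form, so I would follow the line of \cite{blin,Ch-I,hunt,Tem}. Write $L(t,u):=S'(t,u)$ for the quasi-differential at $u$ and, for a compact operator $L$ with singular values $\alpha_1(L)\ge\alpha_2(L)\ge\cdots$, set $\omega_n(L):=\alpha_1(L)\cdots\alpha_n(L)$, the maximal factor by which $L$ dilates $n$-dimensional volumes. The plan is first to reduce the continuous-time statement to a single map. Using the cocycle identity $S(t+\tau)=S(t)\circ S(\tau)$ and the chain rule $L(t+\tau,u)=L(t,S(\tau)u)L(\tau,u)$ together with the submultiplicativity $\omega_n(L_1L_2)\le\omega_n(L_1)\omega_n(L_2)$ and the strict invariance $S(\tau)\Cal A=\Cal A$, the quantity $\bar\omega_n(t):=\sup_{u\in\Cal A}\omega_n(L(t,u))$ is submultiplicative in $t$; continuity of the quasi-differential on the compact set $\Cal A$ makes this sup finite and attained. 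Hence $\frac1t\log\bar\omega_n(t)$ has a limit, and the hypothesis that $S(t)$ contracts infinitesimal $n$-volumes on $\Cal A$ means exactly that this limit is negative, equivalently $\bar\omega_n(t^*)<1$ for some $t^*>0$.

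Fix such a $t^*$, abbreviate $S:=S(t^*)$, $L(u):=L(t^*,u)$, $\kappa:=\bar\omega_n(t^*)<1$. Because $\bar\omega_n(mt^*)\le\kappa^m\to0$, I may replace $S$ by a large iterate $S^m$ and thereby make the uniform contraction factor as small as I wish while keeping $S^m\Cal A=\Cal A$ and preserving uniform quasi-differentiability (a finite composition of quasi-differentiable maps whose differentials are continuous on the compact $\Cal A$ is again such a map). This freedom is what lets the dynamical contraction beat the purely geometric constant that appears below.

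The heart of the argument is a covering lemma for the image of a ball. Given a compact operator $L$ with the splitting $H=E_n\oplus E_n^\perp$ into the span of the first $n$ singular directions and its complement, one has $\|L|_{E_n^\perp}\|=\alpha_{n+1}$, so $L$ maps $B_r\cap E_n^\perp$ into a single ball of radius $\alpha_{n+1}r$, while $L(B_r\cap E_n)$ is an $n$-dimensional ellipsoid with semi-axes $\alpha_jr$. Covering this ellipsoid inside the $n$-dimensional space $L(E_n)$ by $\rho$-balls requires at most $c_n\prod_{j=1}^n\max(1,\alpha_jr/\rho)$ of them, and enlarging each to radius $2\rho$ (with $\rho\ge\alpha_{n+1}r$) absorbs the contracted tail. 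Feeding in the uniform quasi-differentiability estimate $\|Su-Sv-L(v)(u-v)\|\le\gamma(\|u-v\|)\,\|u-v\|$ with $\gamma(r)\to0$, the same count controls the covering of $S(\Cal A\cap B_r)$ up to an additive error negligible as $r\to0$, yielding in the relevant regime $N_{2\rho}\big(S(\Cal A\cap B_r)\big)\le c_n\,(r/\rho)^n\,\omega_n(L(v))\le c_n\,(r/\rho)^n\,\kappa$.

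Finally I would iterate. Starting from a cover of $\Cal A$ by $N_{r_0}(\Cal A)$ balls of radius $r_0$ and applying $S$ to each piece (using $S\Cal A=\Cal A$), the covering lemma gives a cover by balls of radius $r_1=qr_0$ with $N_{r_1}(\Cal A)\le M\,N_{r_0}(\Cal A)$, where $M=c_n\,2^n\,q^{-n}\kappa$. Iterating $k$ times produces $N_{q^kr_0}(\Cal A)\le M^k N_{r_0}(\Cal A)$, whence by the definition of fractal dimension
\be
\dim_f(\Cal A)\le\frac{\log M}{\log(1/q)}=n+\frac{\log(c_n2^n\kappa)}{\log(1/q)}.
\ee
Choosing the iterate $m$ above so that $c_n2^n\kappa^m<1$ forces the numerator negative and gives $\dim_f(\Cal A)<n$. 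The main obstacle, and the step demanding the most care, is the covering lemma: controlling the image of a ball in the infinite-dimensional space by finitely many smaller balls with a count governed only by the top $n$ singular values relies on the decay $\alpha_j\to0$ (compactness of $L(u)$) and on choosing $\rho$ relative to $\alpha_{n+1}r$ so that the infinitely many contracting directions collapse into a single ball; simultaneously one must check that the quasi-differentiability remainder stays subordinate to this geometry uniformly in $u\in\Cal A$, which is precisely where continuity of the quasi-differential on the compact attractor is used.
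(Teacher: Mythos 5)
The paper itself gives no proof of this theorem: it is quoted as a known result with the references \cite{blin,Ch-I,hunt,Tem}, so your attempt can only be measured against the standard Douady--Oesterl\'e/Constantin--Foias--Temam argument contained there --- and against that yardstick it has a genuine gap at its central step. Your ellipsoid count $c_n\prod_{j=1}^{n}\max(1,\alpha_j r/\rho)$ is correct, but the passage to $N_{2\rho}\bigl(S(\Cal A\cap B_r)\bigr)\le c_n\,(r/\rho)^n\,\omega_n(L(v))$ is not: since $\max(1,x)\ge x$, the quantity $(r/\rho)^n\omega_n$ is a \emph{lower} bound for the true count, which is instead comparable to $\max_{0\le k\le n}\omega_k(L(v))\,(r/\rho)^k$. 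The two agree only if all of the first $n$ singular values satisfy $\alpha_j(L(v))\ge\rho/r=q$ uniformly on $\Cal A$ --- the unproved assumption hiding in your phrase ``in the relevant regime''. The hypothesis that $S(t)$ contracts $n$-volumes permits expanding directions ($\omega_k>1$ for $k<n$) and permits $\alpha_n(L(v))$ to be tiny at some points, and then your fixed-ratio iteration fails quantitatively. Indeed, passing to the $m$-th iterate gives $\omega_n\le\kappa_0^m$ but simultaneously $\omega_k\le C_0^m$ with $C_0:=\max_{k<n}\bar\omega_k(t^*)$, which may exceed $1$; the ratio $q$ cannot be taken below $\sup_v\alpha_{n+1}(L(v))$ (otherwise the infinite-dimensional tail cannot be absorbed into a single ball), and $\alpha_{n+1}^n\le\omega_n$ only guarantees $q\sim\kappa_0^{m/n}$. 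Optimizing then yields a dimension bound of the form $n-1+n\log C_0/\log(1/\kappa_0)+o(1)$, which is below $n$ only under the extra condition $\kappa_0C_0^{\,n}<1$. So ``choose $m$ so that $c_n2^n\kappa^m<1$'' does not close the argument: the additional iterations that shrink $\omega_n$ can inflate the lower-order $\omega_k$ at the same exponential rate.

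The missing idea is the interpolation to the Lyapunov dimension, which is exactly what the cited proofs supply. Since $\omega_{k+1}/\omega_k=\alpha_{k+1}$ is nonincreasing, $k\mapsto\log\omega_k$ is concave, so $\omega_0=1$ together with $\omega_n\le\kappa<1$ produces a (generally noninteger) $d<n$ with interpolated $\omega_d<1$; the covering must then be run at the point-dependent scale $\rho\sim\alpha_{k+1}(L(v))\,r$, where $k=[d]$, so that every retained semi-axis exceeds $\rho$, and the iteration must handle variable ratios (the weighted induction over covers of Douady--Oesterl\'e, or the uniform versions in \cite{hunt,Ch-I,blin}), giving $\dim_f(\Cal A)\le d<n$. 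Two secondary points, both fixable: you assume compactness of the quasi-differential, which is not among the hypotheses (replace the SVD splitting by approximation numbers, i.e.\ a rank-$n$ operator $F$ with $\|L-F\|\le\alpha_{n+1}+\eb$, and note that $\alpha_{n+1}\le\kappa^{1/n}<1$ follows from $\omega_n\le\kappa$, so the tail is automatically contracted); and attainment of the supremum over $\Cal A$ is never needed --- finiteness plus uniformity of the differentiability remainder is all the iteration uses.
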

In our case, the semigroup $S(t)$, which corresponds to the fractional
Navier--Stokes--Voigt equation is $C^\infty$-smooth with respect
to initial data (we have verified above the Lipschitz continuity of it,
the estimates for the derivatives are analogous and we leave them to
the reader), so the differentiability assumptions of the theorem
are satisfied and we only need to concentrate on the estimation of
the volume contraction factor, which is defined as follows
$$
\omega_N(S(\cdot)):=\lim_{T\to\infty}
\sup_{u_0\in\mathcal A}\|\Lambda^N S'(T)(u_0)\|_{\Lambda^n H}^{1/T},
$$
where $\Lambda^N$ means $N$th exterior power, see \cite{Tem} for more details.
\par
Very often the derivative $v(t):=S(t)(u_0)\xi_0$ of the operators
$S(t)$ solves the linear equation
%$$
\begin{equation}\label{3.var}
\frac d{dt}v(t)=\Cal L_{u(t)}v(t),\ \ v\big|_{t=0}=\xi_0,\ \ u(t):=S(t){u_0}
\end{equation}
%$$
for some linear  operator $\Cal L_{u(t)}$. Then, under some mild
assumptions, for instance, the validity of the energy equality
$$
\frac12\frac d{dt}\|v(t)\|_H^2=(\Cal L_{u(t)}v(t),v(t))\in L^1(0,T)
$$
in the sense of distributions would be enough, the analogue of the Liouville
theorem holds, see \cite{Tem}, which gives the inequality
$$
\omega_N(S)\le\exp\( \lim_{T\to\infty}\frac1T\sup_{u_0\in\mathcal A}\int_0^T\Tr_N\Cal L_{u(t)}\,dt\),
$$
where
%$$
\begin{equation}\label{3.Tr}
\Tr_N\Cal L_{u(t)}:=\sup\biggl\{\sum_{k=1}^N(\Cal L_{u(t)}\theta_k,\theta_k)\,:
\,%|(\Cal L_{u(t)}\theta_i,\theta_i)|<\infty,\,
(\theta_i,\theta_j)=\delta_{i,j},
\ i,j=1,\dots, N\biggr\}
\end{equation}
Thus, the semigroup $S(t)$ will contract $N$-dimensional volumes if
%$$
\begin{equation}\label{3.q}
q(N):= \lim_{T\to\infty}\frac1T\sup_{u_0\in\mathcal A}\int_0^T\Tr_N\Cal L_{u(t)}\,dt<0
\end{equation}
%$$
(the existence of the limit can be proved exactly as in Remark \ref{Rem-saba})
and, therefore, for estimating the fractal dimension dimension of the attractor,
we only need to find as small as possible $N$ satisfying \eqref{3.q}.
\par
We now return to the fractional Navier--Stokes--Voigt equation. It is
convenient for us to scale time $t\to\alpha^{-s}t$, introduce $\eb:=\alpha^{-s}$
and rewite our equation in the form of
%$$
\begin{equation}\label{3.scaled}
(\eb+A^s)\Dt u+\Pi((u,\Nx)u)+\nu Au=h.
\end{equation}
%$$
Obviosly, the attractor does not change after this scaling.
 Our first theorem gives an estimate for the dimension of the attractor,
 which is reasonable to use when $\alpha$ is not very small.

\begin{theorem}\label{Th3.main1} Let $h\in L^2(\Omega)$ and $s\in[\frac12,1]$ Then the fractal dimension of the attractor $\Cal A$ of the fractional Navier--Stokes--Voigt equation possesses the following estimate:
%$$
%\begin{equation}\label{3.est1}
%\dim_f(\Cal A, V^s)\le \(\frac{2C_{Sob,6}^3C_{BLY}^{\frac56-s}(1+C_{BLY}^sG_1^s)}{3G_1^s}\)^3G^6,
%\end{equation}
\begin{equation}\label{3.est1}
\color{black}
\dim_f(\Cal A, V^s)\le \(\frac52C_{Sob,6}^3C_{BLY}^{-3/2-s}\)^3
\(\frac{1+C_{BLY}^sG_1^s}{G_1^s}\)^3G^6,
\color{black}
\end{equation}
%$$
where $V^s=H^s_A:=D(A^{s/2})$, $G=|\Omega|^{1/2}\frac{\|h\|_{L^2}}{\nu^2}$ is the Grashof number, $G_1=\alpha|\Omega|^{-2/3}$ is the second dimensionless number and the constants are defined in Appendix.
\par
Moreover, if $s\in[\frac34,1]$, we have
%$$
\begin{equation}\label{3.est2}
\dim_f(\Cal A,V^s)\le
\biggl(\frac{\color{black}5\color{black}}3\sqrt{\frac23}%C_s(\Omega)^2
C_{s,CLR}
\color{black}C_{BLY}^{-\frac32}\color{black}\biggr)^{3/2}
\(\frac{1+C_{BLY}^sG_1^s}{G_1^s}\)^{3/2}G^{3/2}.
\end{equation}
%$$
Finally, for $s\in[\frac12,\frac34]$, we have
%$$
\begin{equation}\label{3.est-mid}
\dim_f(\Cal A, V^s)\le \biggl(\frac{\color{black}5\color{black}}3\frac{2^{2(1-s)}}{3^{2(1-s)}}
%C_s(\Omega)^2
C_{s,CLR}C_{BLY}^{\color{black}2s-3\color{black}}
\color{black}C_{Sob,6}^{6-8s}\color{black}\biggr)^{3/2}\(\frac{1+C_{BLY}^sG_1^s}{G_1^s}\)^{3/2}G^{6(1-s)}.
\end{equation}
%$$
\end{theorem}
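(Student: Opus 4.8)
The plan is to run the volume--contraction scheme recalled in \eqref{3.q}, taking as the underlying Hilbert structure the energy metric of the scaled equation \eqref{3.scaled}, namely $(v,w)_*:=((\eb+A^s)v,w)$; in this metric the variational equation carries a genuine energy identity, so the Liouville/trace formula applies and it suffices to find the smallest $N$ for which $q(N)<0$. Linearising \eqref{3.scaled} along $u(t)=S(t)u_0$ gives $(\eb+A^s)\Dt v+\nu Av+\Pi((u,\Nx)v)+\Pi((v,\Nx)u)=0$, i.e. $\Dt v=\Cal L_u v$ with $\Cal L_u v=-(\eb+A^s)^{-1}\big(\nu Av+\Pi((u,\Nx)v)+\Pi((v,\Nx)u)\big)$.

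First I would evaluate the trace \eqref{3.Tr} in the $(\cdot,\cdot)_*$ metric. For a family $\{\theta_k\}_{k=1}^N$ orthonormal in $(\cdot,\cdot)_*$ the divergence--free identity $((u,\Nx)\theta_k,\theta_k)=0$ annihilates the transport term and leaves
$$\sum_{k=1}^N(\Cal L_u\theta_k,\theta_k)_*=-\nu\sum_{k=1}^N\|\Nx\theta_k\|_{L^2}^2-\sum_{k=1}^N((\theta_k,\Nx)u,\theta_k),$$
so that, with $\rho:=\sum_{k=1}^N|\theta_k|^2$,
$$\Tr_N\Cal L_u\le-\nu K+\|\Nx u\|_{L^2}\,\|\rho\|_{L^2},\qquad K:=\sum_{k=1}^N\|\Nx\theta_k\|_{L^2}^2 .$$

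Two spectral inputs then close the argument. Writing $\Gamma:=\sum_k\theta_k\otimes\theta_k$, the $(\cdot,\cdot)_*$--orthonormality says precisely that $(\eb+A^s)^{1/2}\Gamma(\eb+A^s)^{1/2}$ is a rank--$N$ projection, whence $K=\Tr[A\Gamma]\ge\sum_{k=1}^N\la_k/(\eb+\la_k^s)=:\Sigma_N$ (the minimum being attained on the first $N$ eigenvectors of $A$, since $\la\mapsto\la/(\eb+\la^s)$ is increasing for $s\le1$); estimating $\Sigma_N$ from below by Berezin--Li--Yau together with the Weyl law $\la_k\gtrsim(k/|\Om|)^{2/3}$ is where the counting number $N$ enters, and the elementary identity $\eb+\la_1^s=\la_1^s(1+G_1^s)/G_1^s$ (recall $\eb=\al^{-s}$, $G_1\sim\al\la_1$) produces the factor $(1+G_1^s)/G_1^s$. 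For the density I would, in the crude regime \eqref{3.est1}, use the triangle inequality and the $L^2$--$L^6$ interpolation $V^1\subset L^6$ to get $\|\rho\|_{L^2}\le C_{Sob,6}^{3/2}\,\sigma^{1/4}K^{3/4}$ with $\sigma:=\sum_k\|\theta_k\|_{L^2}^2=\Tr[\Gamma]\le\sum_{k=1}^N(\eb+\la_k^s)^{-1}=:\Sigma_N^{(0)}$; for the sharper bounds \eqref{3.est2}, \eqref{3.est-mid} I would instead invoke the fractional Cwikel--Lieb--Rosenblum inequality (constant $C_{s,CLR}$) to control $\|\rho\|_{L^2}$ directly, which lowers the homogeneity in $\|\Nx u\|$ and hence the power of $G$. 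In every case one inserts these into the trace bound, keeps the competition between $-\nu K$ and the $K^{3/4}$ (resp.\ CLR) term so that the surviving estimate stays \emph{linear} in $\|\Nx u\|_{L^2}$, and then time--averages via Corollary \ref{Cor2.est}, $\limsup_{T}\frac1T\int_0^T\|\Nx u\|_{L^2}^2\,dt\le\|h\|_{L^2}^2/(\nu^2\la_1)$, and Cauchy--Schwarz; the condition $q(N)<0$ reduces to a scalar inequality of the shape $\nu\,\Sigma_N^{1/4}\gtrsim(\Sigma_N^{(0)})^{1/4}\,\overline{\|\Nx u\|}$ (crude) or its CLR analogue, which I invert for $N$ and rewrite through $G$ and $G_1$ to reach \eqref{3.est1}--\eqref{3.est-mid}.

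The main obstacle is the spectral analysis forced by the \emph{weighted} orthonormality: the Berezin--Li--Yau lower bound for $\Sigma_N$ and, above all, the density/Cwikel--Lieb--Rosenblum bounds must be established for families orthonormal with respect to $(\eb+A^s)$ rather than $L^2$, which is exactly why these combined spectral inequalities are developed separately in the Appendix. Carrying the sharp constants through the joint optimisation in $(\sigma,K)$ and through the two regimes $s\in[\frac34,1]$ and $s\in[\frac12,\frac34]$ — the split reflecting whether the embedding $V^s\subset L^{6/(3-2s)}$ alone suffices or an extra $C_{Sob,6}$ factor is needed — is the delicate bookkeeping that separates the three stated bounds.
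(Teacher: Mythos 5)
Your overall framework --- the scaled equation \eqref{3.scaled}, the weighted metric $((\eb+A^s)\cdot,\cdot)$, the trace formula, the lower bound $K\ge\hat\zeta(\alpha,s,N)=\sum_{k\le N}\lambda_k/(\eb+\lambda_k^s)$ via Berezin--Li--Yau and monotonicity, and the transfer of the spectral inequalities to systems that are merely suborthonormal in $H^s_A$ (Lemmas \ref{LemA1}, \ref{LemA.sub}) --- coincides with the paper's, and for \eqref{3.est2} and \eqref{3.est-mid} your plan is essentially the paper's proof: for $s\ge\frac34$ the CLR-type bound \eqref{A.L2-big} makes the trace estimate \emph{linear} in $\|\Nx u(t)\|_{L^2}$, averaged by Cauchy--Schwarz, while for $s\le\frac34$ one uses \eqref{A.L2-small} and Young's inequality, leaving $\|\Nx u\|_{L^2}^{4(1-s)}$ with $4(1-s)\le2$, which \eqref{obvious} can average. (Small correction: the split at $s=\frac34$ is governed by whether $\frac3{3-2s}\ge2$, i.e.\ whether H\"older alone passes from $L^{3/(3-2s)}$ to $L^2$, the extra $C_{Sob,6}$ factors for $s<\frac34$ coming from interpolation against $\|\rho\|_{L^3}$ as in \eqref{rhoL3}; also the stated constants need the divergence-free refinement $\sum_k((\theta_k,\Nx)u,\theta_k)\le\sqrt{2/3}\,\|\Nx u\|_{L^2}\|\rho\|_{L^2}$, which your crude bound drops.)

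However, your route to the first estimate \eqref{3.est1} has a genuine gap at the time-averaging step. From $\Tr_N\le-\nu K+B(t)K^{3/4}$ with $B(t)=C_{Sob,6}^{3/2}\sigma^{1/4}\|\Nx u(t)\|_{L^2}$ you cannot ``keep the estimate linear in $\|\Nx u\|$'': the uniform-in-family bound is $\sup_{K\ge\Sigma_N}(-\nu K+B(t)K^{3/4})$, which equals $-\nu\Sigma_N+B(t)\Sigma_N^{3/4}$ only under the pointwise-in-time condition $\Sigma_N\ge(3B(t)/4\nu)^4$, and otherwise is of order $\nu^{-3}B(t)^4$; moreover this sup is convex in $B$, so Jensen runs the wrong way and you cannot replace $B(t)$ by its time average to reach your scalar inequality $\nu\Sigma_N^{1/4}\gtrsim(\Sigma_N^{(0)})^{1/4}\,\overline{\|\Nx u\|}$. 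Absorbing by Young instead produces $\nu^{-3}\|\Nx u(t)\|_{L^2}^4$, but the only quantitative information on the attractor (Corollary \ref{Cor2.est} together with \eqref{obvious}) controls time averages of $\|\Nx u\|_{L^2}^{\alpha}$ only for $\alpha\le2$, and for $s<1$ the phase space $V^s$ gives no explicit pointwise bound on $\|\Nx u(t)\|_{L^2}$. This is exactly why the paper proves \eqref{3.est1} differently: it estimates $|((\theta_k,\Nx)\theta_k,u)|\le\|\theta_k\|_{L^3}\|\Nx\theta_k\|_{L^2}\|u\|_{L^6}$, absorbs $\|\Nx\theta_k\|^2_{L^2}$ by Young, and bounds $\sum_k\|\theta_k\|^2_{L^3}$ by the min-max inequality \eqref{A.mm}, so that only $\|\Nx u(t)\|_{L^2}^2$ survives --- precisely the quantity whose time average is known. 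Your interpolation bound $\|\rho\|_{L^2}\le C_{Sob,6}^{3/2}\sigma^{1/4}K^{3/4}$ would therefore need to be replaced (or supplemented by an a priori $V^1$ bound, available only at $s=1$ and in any case not reproducing the displayed constant) before the first estimate is established.
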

\begin{proof} In our case the semigroup $S(t)$, which corresponds
to fractional Navier--Stokes--Voigt equation \eqref{3.scaled}, acts
in the  Hilbert space $V^s$ where we introduce the equivalent inner
product
%$$
\begin{equation}\label{3.norma}
(u_1,u_2)_{V^s_\alpha}:=\eb (u_1,u_2)+(A^su_1,u_2)=((\eb+A^s)u_1,u_2).
\end{equation}
%$$
The equation of variations  reads
 %$$
 \begin{equation}\label{3.var1}
 (\eb+A^s)\Dt v=-\nu A-\Pi\((u,\Nx)v+(v,\Nx u)\)
 \end{equation}
 %$$
 and, therefore, $\Cal L_{u(t)}v=(\eb+A^s)^{-1}\(-\nu A-\Pi\((u,\Nx)v+(v,\Nx u)\)\)$. Following the general theory, we only need to estimate the $N$-dimensional trace of $\Cal L_{u(t)}$ in the space $V^s$ with inner product \eqref{3.norma}. Let $\{\theta_k\}_{k=1}^N$ be an orthonormal system
 \color{black} with respect to \eqref{3.norma}\color{black}.
Then,
%$$
\begin{multline}\label{3.base}
(\Cal L_{u(t)}\theta_k,\theta_k)_{V^s_\alpha}= -\nu\|\Nx
\theta_k\|^2_{L^2}-((\theta_k,\Nx) u(t),\theta_k)
+((u,\Nx)\theta_k,\theta_k))\le\\\le -\nu\|\Nx\theta_k\|^2_{L^2}+
|((\theta_k,\Nx)u,\theta_k)|.
\end{multline}
%$$
The RHS of this inequality can be estimated in several
different ways. The first way is purely based on the min-max inequality
(and does not utilize more sophisticated spectral inequalities), namely, we use that

\color{black}

%$$
  \begin{multline*}
  |((\theta_k,\Nx)u_k,\theta_k)|=|((\theta_k,\Nx)\theta_k,u)|\le
\|\theta_k\|_{L^3}\|\nabla\theta_k\|_{L^2}\|u\|_{L^6}\le\\
\le\frac\nu 2\|\Nx\theta_k\|^2_{L^2}+
  \frac1{2\nu}%\frac23
  \|u\|_{L^6}^2\|\theta_k\|^2_{L^3}\le \frac\nu 2\|\Nx\theta_k\|^2_{L^2}+
  \frac{C_{Sob,6}^2}{2\nu}\|\Nx u\|^2_{L^2}\|\theta_k\|^2_{L^3},
  \end{multline*}
  %$$
and, therefore, using the min-max principle and \eqref{A.mm}, we end up with
%$$
\begin{multline}
\Tr\Cal L_{u(t)}\le -\frac\nu2\sum_{k=1}^N\|\Nx\theta_k\|^2_{L^2}+
\frac{C_{Sob,6}^2}{2\nu}\|\Nx u(t)\|^2_{L^2}\sum_{k=1}^N\|\theta_k\|^2_{L^3}\le\\
\le -\frac\nu2\hat\zeta(\alpha,s,N)+\frac34
\frac{\|\Nx u(t)\|_{L^2}^2}{\nu}C_{Sob,6}^3
|\Omega|^{\frac23(s-\frac12)}C_{BLY}^{\frac12-s}N^{\frac23(2-s)}.
\end{multline}
%$$
Together with \eqref{A.gest1}, and averaging in time
$$
\limsup_{t\to\infty}\frac1t\int_0^t\|\Nx u(s)\|^2_{L^2}\, ds\le
\frac{\|h\|^2_{L^2}}{\nu^2\lambda_1}\le
\frac{|\Omega|^{2/3}\|h\|^2_{L^2}}{C_{BLY}\nu^2},
$$
we finally get
%$$
\begin{multline*}
q(N)\le-\frac{3\nu}{10}|\Omega|^{-2(1-s)/3}
\frac{C_{BLY}G_1^s}{1+C_{BLY}^sG_1^s}N^{(5-2s)/3}+\\+\frac34
\frac{|\Omega|^{2/3}\|h\|^2_{L^2}}{\nu^3}
C_{Sob,6}^3
|\Omega|^{\frac23(s-\frac12)}C_{BLY}^{\frac32-s}N^{(4-2s)/3}=\\=
\frac{3\nu}{10}|\Omega|^{-2(1-s)/3}
\frac{C_{BLY}G_1^s}{1+C_{BLY}^sG_1^s}N^{(4-2s)/3}\(-N^{1/3}+
\frac52C_{Sob,6}^3C_{BLY}^{-3/2-s}\frac{1+C_{BLY}^sG_1^s}{G_1^s}G^2\)
\end{multline*}
%$$
and the first estimate of the theorem is proved.

\color{black}

\medskip

Let now $s\in[\frac34,1]$. Then we can use the Cwikel-Lieb-Rosenblum type
inequality \eqref{A.L2-big} for estimating the traces of $\Cal
L_{u(t)}$ and get
\color{black}(see also \cite{IlKoZe} for factor $\sqrt{2/3}$ below)\color{black}
%$$
\begin{multline*}
\Tr_N\Cal L_{u(t)}\le-\nu\hat\zeta(\alpha,s,N)-\sqrt{\frac23}\|\Nx u(t)\|_{L^2}\|\rho\|_{L^2}\le\\\le
-\nu\hat\zeta(\alpha,s,N)+\sqrt{\frac23}
%C_s(\Omega)^2
C_{s,CLR}|\Omega|^{\frac{2s}3-\frac12}N^{1-\frac{2s}3}\|\Nx u(t)\|_{L^2}.
\end{multline*}
%$$
After that, averaging in time and  using (with $\alpha=1$) the obvious estimate
\begin{equation}\label{obvious}
\color{black}
\frac1t\int_0^t\|\Nx u(s)\|_{L^2}^\alpha\,ds\le
\(\frac1t\int_0^t\|\Nx u(s)\|^2_{L^2}\,ds\)^{\alpha/2}\!\!,
\quad 1\le\alpha\le2
\color{black}
\end{equation}
we get
%$$
\begin{multline*}
q(N)\le -\nu\hat \zeta(\alpha,s,N)+
\sqrt{\frac23}%C_s(\Omega)^2
C_{s,CLR}|\Omega|^{\color{black}\frac{2s}3-\frac16\color{black}}
C_{BLY}^{-\frac12}\nu^{-1}\|h\|_{L^2}N^{1-\frac{2s}3}=
\\=-\nu\frac3{5}|\Omega|^{\frac{2(s-1)}3}\!
\frac{C_{BLY}^{}G_1^s}{1+C_{BLY}^sG_1^s}N^{\frac53-\frac{2s}3}+
\sqrt{\frac23}%C_s(\Omega)^2
C_{s,CLR}|\Omega|^{\frac{2s}3-\frac16}
C_{BLY}^{-\frac12}\nu^{-1}\|h\|_{L^2}N^{1-\frac{2s}3}=\\=
-\nu\frac3{5}|\Omega|^{\frac{2(s-1)}3}
\frac{C_{BLY}^{}G_1^s}{1+C_{BLY}^sG_1^s}N^{1-\frac{2s}3}\!\!
\(N^{2/3}-\frac{5}3\sqrt{\frac23}%C_s(\Omega)^2
C_{s,CLR}C_{BLY}^{-\frac32} \frac{1+C_{BLY}^sG_1^s}{G_1^s}G\)
\end{multline*}
%$$
and the second estimate of the theorem is proved.
\par
Let now $s\in[\frac12,\frac34]$. Then we use \eqref{A.L2-small} to get
$$
\Tr_N\Cal L_{u(t)}\le-\nu R+\sqrt{\frac23}\|\Nx u(t)\|_{L^2}%C_s(\Omega)^{\frac1{2(1-s)}}
C_{s,CLR}^{\frac1{4(1-s)}}
\color{black}C_{Sob,6}^\frac{3-4s}{2(1-s)}\color{black}N^{\frac{3-2s}{12(1-s)}}
R^{\frac{3-4s}{4(1-s)}},
$$
where $R:=\sum_{k=1}^N\|\Nx\theta_k\|^2_{L^2}$. Using the Young inequality
$$
BR^{\frac{3-4s}{4(1-s)}}\le\frac{\nu(3-4s)}{4(1-s)}R+\frac1{4(1-s)}\nu^{4s-3}B^{4(1-s)},
$$
together with the inequality $R\ge\hat\zeta(\alpha,s,N)$,
we end up with
$$
\Tr_N\Cal L_{u(t)}\le -\frac\nu{4(1-s)}\hat\zeta(\alpha,s,N)+\frac{2^{2(1-s)}}{4(1-s)3^{2(1-s)}}
%C_s(\Omega)^2
C_{s,CLR}\color{black}C_{Sob,6}^{6-8s}\color{black}\nu^{4s-3}N^{1-\frac{2s}3}\|\Nx u(t)\|_{L^2}^{4(1-s)}.
$$
 Using \eqref{A.gest1}, averaging in time and using
\eqref{obvious} with $\alpha=4(1-s)$, we arrive at
%$$
\begin{multline*}
q(N)\le -\frac{\nu}{4(1-s)}\frac3{\color{black}5\color{black}}|\Omega|^{\frac{2(s-1)}3}
\frac{C_{BLY}^{}G_1^s}{1+C_{BLY}^sG_1^s}N^{\frac53-\frac{2s}3}+\\+
\frac{2^{2(1-s)}}{4(1-s)3^{2(1-s)}}
%C_s(\Omega)^2
C_{s,CLR}\color{black}C_{Sob,6}^{6-8s}\color{black}\,\nu^{8s-7}C_{BLY}^{2(s-1)}|\Omega|^{\frac{4(1-s)}3}
N^{1-\frac{2s}3}\|h\|_{L^2}^{4(1-s)}=\\=
\frac{\nu}{4(1-s)}\frac3{5}|\Omega|^{\frac{2(s-1)}3}
\frac{C_{BLY}^{}G_1^s}{1+C_{BLY}^sG_1^s}N^{1-\frac{2s}3}
\times\\\times\(-N^{2/3}+\frac{5}3\frac{2^{2(1-s)}}{3^{2(1-s)}}
%C_s(\Omega)^2
C_{Sob,6}^{6-8s}C_{s,CLR}C_{BLY}^{\color{black}2s-3\color{black}}\frac{1+C_{BLY}^sG_1^s}{G_1^s}G^{4(1-s)}\)
\end{multline*}
%$$
which gives the third estimate of the theorem and finishes its proof.
\end{proof}
We see that estimate \eqref{3.est1} is essentially worse than \eqref{3.est2} and \eqref{3.est-mid}. The purpose to present \eqref{3.est1} is to illustrate what can be obtained without using the advanced spectral inequalities and based only on the elementary corollaries of min-max principle. In turn, this emphasize the power of spectral inequalities in estimating the dimension of attractors in hydrodynamics.

 Our second main theorem studies the case where $G_1\ll1$. For simplicity, we consider below only the case $s=1$ although similar results can be obtained for $s<1$ as well.
 \begin{theorem}\label{Th3.main2} Let $s=1$ and
%$$
\begin{equation}\label{condition}\color{black}
G_1^{3}G^{4}\le\(\frac9{20}\)^2 C_{BLY}^{-\frac13}C_{LT}^{-1}C_{Sob,6}^{-2}C_{1.CLR}^{-\frac43}.
 \color{black}
 \end{equation}
 %$$
 Then, the following estimate holds:
 %$$
 \begin{equation}\label{3.est3}
 \dim_f(\Cal A, V^1)\le \biggl(\color{black}\frac{20}9C_{BLY}^{-2}C_{LT}^{1/2}\color{black}C_{Sob,6}C_{1,CLR}^{\frac23}\biggr)^{\frac9{13}}
 G_1^{-\frac6{13}}G^{\frac{18}{13}}.
 \end{equation}
 %$$
 \end{theorem}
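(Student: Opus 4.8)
The plan is to run the volume-contraction scheme of Theorem \ref{Th3.main1}, replacing the pure Cwikel--Lieb--Rosenblum bound for $\|\rho\|_{L^2}$ that produced \eqref{3.est2} by a sharper \emph{combined} Lieb--Thirring/CLR inequality; this is what brings the constant $C_{LT}$ into \eqref{3.est3} and improves the powers of $G$ and $G_1$ for small $\alpha$. First I specialize the trace estimate \eqref{3.base} to $s=1$. For a system $\{\theta_k\}_{k=1}^N$ orthonormal in $V^1_\alpha$ one has $(\Cal L_{u(t)}\theta_k,\theta_k)_{V^1_\alpha}\le -\nu\|\Nx\theta_k\|^2_{L^2}+|((\theta_k,\Nx)u,\theta_k)|$, and summing over $k$, using $\divv\theta_k=0$ and the Cauchy--Schwarz inequality with the trace-free refinement of $\Nx u$, I obtain
$$\Tr_N\Cal L_{u(t)}\le -\nu R+\sqrt{\tfrac23}\,\|\Nx u(t)\|_{L^2}\,\|\rho\|_{L^2},\qquad R:=\sum_{k=1}^N\|\Nx\theta_k\|^2_{L^2},\quad \rho:=\sum_{k=1}^N|\theta_k|^2 .$$
As usual, everything then reduces to finding the smallest $N$ for which the time average $q(N)$ of \eqref{3.q} is negative.

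The decisive input is the spectral estimate. Since $\{\theta_k\}$ is orthonormal for the shifted form $\eb+A$ with $\eb=\alpha^{-1}$, I use the combined bound from the appendix, of the type
$$\|\rho\|_{L^2}\le C\,\eb^{-1/4}\,N^{-5/36}\,R^{5/12},$$
obtained by interpolating the three-dimensional Lieb--Thirring inequality $\int_\Omega\rho^{5/3}\,dx\le C_{LT}R$ against the CLR-type control underlying \eqref{A.L2-big}; the constant $C$ is an explicit product of powers of $C_{LT}$, $C_{Sob,6}$ and $C_{1,CLR}$. I expect this mixed inequality to be the main obstacle: in contrast to the pure-$N$ bound used for \eqref{3.est2}, it keeps the dependence on $R$ (hence on the Berezin--Li--Yau floor $\hat\zeta$) and on the spectral shift $\eb$, and it is exactly this extra information that produces the exponent $13$ in \eqref{3.est3}.

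With this bound I proceed as in the range $s\in[\frac12,\frac34]$ of Theorem \ref{Th3.main1}. Inserting it into the trace inequality and applying Young's inequality with conjugate exponents $\frac{12}{5}$ and $\frac{12}{7}$ to absorb $R^{5/12}$ into $-\nu R$, I am left with a term $-c\nu R$, $c\in(0,1)$, plus a remainder proportional to $\big(\|\Nx u\|_{L^2}\,\eb^{-1/4}N^{-5/36}\big)^{12/7}$. Using the Berezin--Li--Yau lower bound \eqref{A.gest1}, namely $R\ge\hat\zeta(\alpha,1,N)\ge\frac35\frac{C_{BLY}G_1}{1+C_{BLY}G_1}N$, and the smallness of $G_1$ to replace $\frac{C_{BLY}G_1}{1+C_{BLY}G_1}$ by a quantity of order $C_{BLY}G_1$, the negative part becomes of order $-\nu C_{BLY}G_1 N$. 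Finally I average in time, bounding $\frac1t\int_0^t\|\Nx u\|_{L^2}^{12/7}\le\big(\frac1t\int_0^t\|\Nx u\|_{L^2}^2\big)^{6/7}$ by \eqref{obvious} (admissible since $\frac{12}{7}\le2$) and the right-hand side through Corollary \ref{Cor2.est}, and I rewrite $\eb=\alpha^{-1}$ and $\lambda_1$ via $\alpha=G_1|\Omega|^{2/3}$ and $\lambda_1\ge C_{BLY}|\Omega|^{-2/3}$; all powers of $|\Omega|$ then cancel.

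Collecting the two contributions gives a bound of the form $q(N)\le -c\,\nu C_{BLY}G_1\,N\big(1-C' G_1^{-4/7}G^{12/7}N^{-26/21}\big)$ with an explicit $C'$, so $q(N)<0$ as soon as $N^{26/21}\ge C' G_1^{-4/7}G^{12/7}$. Solving for $N$ reproduces the powers $-\frac{6}{13}$ of $G_1$ and $\frac{18}{13}$ of $G$, together with the constant of \eqref{3.est3}. The hypothesis \eqref{condition} serves precisely to place us in this regime: it forces $C_{BLY}G_1\ll1$, so that the simplification of $\hat\zeta$ is legitimate, and it guarantees that the resulting $N$ lies where the combined inequality and the above optimization are consistent; in particular it is exactly the range in which \eqref{3.est3} improves upon the general estimate \eqref{3.est2}.
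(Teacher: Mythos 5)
Your overall scheme (the trace bound with the factor $\sqrt{2/3}$, a combined Lieb--Thirring/CLR estimate for $\|\rho\|_{L^2}$, Young's inequality, a Berezin--Li--Yau floor for $R$, time averaging via \eqref{obvious}) is indeed the paper's scheme, but the decisive inequality you invoke, $\|\rho\|_{L^2}\le C\,\eb^{-1/4}N^{-5/36}R^{5/12}$, is not a true inequality and is not what interpolating Lieb--Thirring against CLR produces. In your scaled variables the two endpoint bounds are: from \eqref{A.LT2} applied to $\{\eb^{1/2}\theta_k\}$ (suborthonormal in $L^2$), $\|\rho\|_{L^2}\le C_{LT}^{3/8}C_{Sob,6}^{3/4}\eb^{-1/4}R^{3/4}$; and from \eqref{A.L2-big} applied to $\{\theta_k\}$ (suborthonormal in $V^1$), $\|\rho\|_{L^2}\le C_{1,CLR}|\Omega|^{1/6}N^{1/3}$. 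Any H\"older combination has the form $\eb^{-\theta/4}R^{3\theta/4}\bigl(|\Omega|^{1/6}N^{1/3}\bigr)^{1-\theta}$, so the choice $3\theta/4=5/12$ (i.e.\ $\theta=5/9$) yields $\eb^{-5/36}|\Omega|^{2/27}N^{4/27}R^{5/12}$ --- a \emph{positive} power of $N$, not $N^{-5/36}$. No bound with a negative power of $N$ alongside $R^{5/12}$ can hold: test it on $N$ normalized bumps with disjoint supports (or on the eigenbasis $\theta_k=(\eb+\lambda_k)^{-1/2}e_k$ in the regime where Lieb--Thirring is saturated); the left-hand side grows like $N^{1/2}$ while your right-hand side grows like $N^{5/18}$. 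Moreover, if one reruns your optimization with the \emph{correct} interpolated exponents and the linear floor $\hat\zeta\gtrsim C_{BLY}G_1N$ that you use, the balance gives $N\gtrsim G_1^{-48/47}G^{108/47}$, far worse than \eqref{3.est3}. Your final exponents $-\tfrac6{13}$, $\tfrac{18}{13}$ come out only because the invoked inequality was tailored to produce them.

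The second, related gap is the floor for $R$. You use the generic bound \eqref{A.gest1} ($\hat\zeta\gtrsim G_1N$ for small $G_1$), but the whole point of the regime \eqref{condition} is that the strengthened bound \eqref{A.ggest} applies: when $C_{BLY}G_1N^{2/3}\le1$ one has $1+\alpha\lambda_k\le2$ for all $k\le N$, whence $R\ge\tfrac3{10}C_{BLY}G_1N^{5/3}$ in your scaled variables. The exponent $13$ in \eqref{3.est3} arises precisely from balancing this $N^{5/3}$ (not $N$) against a positive term of order $N^{2/9}$: the paper interpolates with $\theta=2/3$, writing $\|\rho\|_{L^2}=\|\rho\|_{L^2}^{2/3}\|\rho\|_{L^2}^{1/3}\le C_{LT}^{1/4}C_{Sob,6}^{1/2}C_{1,CLR}^{1/3}|\Omega|^{1/18}\alpha^{-1/3}N^{1/9}R^{1/2}$ (in the unscaled metric \eqref{non-scaled}), absorbs $R^{1/2}$ by Young's inequality with exponent $2$, and obtains $N^{13/9}\gtrsim G_1^{-2/3}G^2$; the hypothesis \eqref{condition} is then verified a posteriori to guarantee $C_{BLY}G_1N^{2/3}\le1$ for the resulting $N$, rather than merely forcing $G_1$ to be small as you suggest. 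To repair your argument, replace your combined inequality by this $\theta=2/3$ interpolation and your linear floor by \eqref{A.ggest}.
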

\begin{proof} In this case, it is more convenient to avoid scaling time and endow the phase space $V^1$ with the standard metric
%$$
\begin{equation}\label{non-scaled}
(u_1,u_2)_{\bar V^1_\alpha}:=(u_1,u_2)+\alpha(Au_1,u_2)=((1+\alpha A)u_1,u_2).
\end{equation}
%$$
Then, the generator of the linearized equation is
$$
\overline{\Cal L}_{u(t)}v:-(1+\alpha A)^{-1}\(\nu Av-\Pi((v,\Nx)u(t)+(u(t),\Nx)v)\)
$$
and, therefore,
$$
(\overline{\Cal L}_{u(t)}\theta,\theta)_{\bar V^1_\alpha}=-\nu\|\nabla\theta\|^2_{L^2}-((\theta,\Nx)u(t),\theta).
$$
Thus, we basically need to estimate the same quantities, but now the system $\{\theta_k\}_{k=1}^N$ should be orthonormal with respect to the non-scaled inner product \eqref{non-scaled}. Therefore,
$$
\Tr_N\overline{\Cal L}_{u(t)}\le-\nu R+\sqrt{2/3}\|\Nx u(t)\|_{L^2}\|\rho\|_{L^2}.
$$
The key problem here is to estimate the $L^2$-norm of $\rho$ properly. Indeed, using Lieb-Thirring inequality \eqref{A.LT2} and the fact that the $\theta_k$'s are suborthonormal in $L^2$, we have
%$$
\begin{equation}
\|\rho\|_{L^2}\le C_{LT}^{3/8}C_{Sob,6}^{3/4}R^{3/4}.
\end{equation}
%$$
On the other hand, using %the Cwikel-Lieb-Rosenblum
inequality \eqref{A.L2-big} and the fact that the system $\{\alpha^{1/2}\theta_k\}$ is suborthonormal in $V^1$, we have
%$$
\begin{equation}
\|\rho\|_{L^2(\Omega)}\le C_{1,CLR}|\Omega|^{1/6}\alpha^{-1}N^{1/3}.
\end{equation}
%$$
Combining two last estimates, we arrive at
$$
\|\rho\|_{L^2}=\|\rho\|_{L^2}^{2/3}\|\rho\|_{L^2}^{1/3}\le
C_{LT}^{1/4}C_{Sob,6}^{1/2}C_{1,CLR}^{1/3}
|\Omega|^{1/{18}}\alpha^{-1/3}N^{1/9}R^{1/2}
$$
and, consequently,
%$$
\begin{multline*}
\Tr_N\overline{\Cal L}_{u(t)}\le-\nu R+C_{LT}^{1/4}C_{Sob,6}^{1/2}C_{1,CLR}^{1/3}
|\Omega|^{1/{18}}\alpha^{-1/3}N^{1/9}R^{1/2}\sqrt{2/3}\|\Nx u(t)\|_{L^2}\le\\\le-\frac\nu2R+\frac{\|\Nx u(t)\|^2_{L^2}}{3\nu}C_{LT}^{1/2}C_{Sob,6}C_{1,CLR}^{2/3}
|\Omega|^{1/9}\alpha^{-2/3}N^{2/9}.
\end{multline*}
%$$
Assume now that $N$ satisfies $C_{BLY}G_1N^{2/3}\le1$. Then using \eqref{A.ggest}, averaging in time and recalling that $\alpha=G_1|\Omega|^{2/3}$ we have
%$$
\begin{multline*}
q(N)\le -\frac{3\nu}{\color{black}20\color{black}}C_{BLY}|\Omega|^{-\frac23}N^{\frac53}+
\frac{\|h\|^2_{L^2}}{2\nu^3C_{BLY}}C_{LT}^{\frac12}C_{Sob,6}C_{1,CLR}^{\frac23}
|\Omega|^{\frac13}G_1^{-\frac23}N^{\frac29}=\\= \frac{3\nu}{20}C_{BLY}|\Omega|^{-\frac23}N^{\frac29}
\(-N^{\frac{13}9}+\color{black}\frac{20}9\color{black}C_{BLY}^{-2}C_{LT}^{\frac12}C_{Sob,6}C_{1,CLR}^{\frac23}G_1^{-\frac23}G^2\)
\end{multline*}
%$$
and this gives estimate \eqref{3.est3} if the smallness condition $C_{BLY}G_1N^{\frac23}\le1$ is satisfied for $N$s, which \color{black} gives  the upper bound of $\dim_f(\Cal A)$ of \eqref{3.est3}. \color{black}The smallness condition  is guaranteed by \eqref{condition}. Indeed, use the right hand side of \eqref{3.est3} as $N$  in this condition, we have
$$
C_{BLY}G_1\biggl(\color{black}\frac{20}9C_{BLY}^{-2}C_{LT}^{1/2}
C_{Sob,6}C_{1,CLR}^{\frac23}\biggr)^{\frac6{13}}
G_1^{-\frac4{13}}G^{\frac{12}{13}}\le 1
$$
and this is equivalent to \eqref{condition}\color{black}. Thus, the theorem is proved.
\end{proof}
\begin{remark} All of the estimates obtained above can be written and justified for $h\in H^{-1}_\sigma(\Omega)$. We write the $L^2$-norm in order to keep the traditional definition of the Grashof number only.
\end{remark}

\appendix
\section{Some useful estimates}
In this appendix,  we give some estimates for the sums related with
orthonormal and suborthonormal systems which are the key
ingredients for estimation of the attractor dimension.
We recall that the system of vectors $\{\theta_i\}_{i=1}^N$ is suborthonormal
in a Hilbert space $H$ if
%$$
\begin{equation}\label{A.sub}
\sum_{i,j=1}^N\xi_i\xi_j(\theta_i,\theta_j)=\|\sum_{i}\xi_i\theta_i\|^2_H\le |\xi|^2,\ \ \forall \xi\in\R^N.
\end{equation}
We will use this concept in the following situation.
\begin{lemma}\label{LemA1} Let $H_1$ and $H_2$ be two Hilbert spaces
and let \color{black}$\{\theta_i\}_{i=1}^N$
 be suborthonormal in $H:=H_1\cap H_2$ \color{black} with respect to the  inner product generated by the following norm:
$$
\|u\|^2_H:=\alpha_1^2\|u\|^2_{H_1}+\alpha_2^2\|u\|^2_{H_2},\  \ \alpha_1,\alpha_2>0.
$$
Then the systems of vectors $\{\alpha_1\theta_i\}_{i=1}^N$
and $\{\alpha_2\theta_i\}_{i=1}^N$ are suborthonormal in $H_1$ and $H_2$ respectively.
\par
\color{black} Moreover, if  $\{\theta_i\}_{i=1}^N$ is suborthonormal in a Hilbert space $Z$ and $L: Z\to Y$ is a linear map from $Z\to Y$, where $Y$ is another Hilbert space and the norm of $L$ does not exceed one, then the system $\{L\theta_i\}_{i=1}^N$ is suborthonormal in $Y$.
\color{black}
\end{lemma}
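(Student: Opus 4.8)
The plan is to prove the second (more general) assertion first and then obtain the first statement as an immediate special case, since the weighted norm defining $H$ makes the scalings $u\mapsto\alpha_j u$ into norm-nonincreasing maps. Throughout I would work only with the quadratic-form characterization \eqref{A.sub} of suborthonormality, so that the whole argument reduces to expanding a single squared norm and invoking the linearity of the map to pull a finite sum outside of it.

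For the general claim, let $\{\theta_i\}_{i=1}^N$ be suborthonormal in $Z$ and let $L\colon Z\to Y$ have operator norm at most one. Fixing an arbitrary $\xi\in\R^N$, I would write
$$
\Big\|\sum_{i=1}^N\xi_i L\theta_i\Big\|_Y^2=\Big\|L\Big(\sum_{i=1}^N\xi_i\theta_i\Big)\Big\|_Y^2\le\|L\|^2\Big\|\sum_{i=1}^N\xi_i\theta_i\Big\|_Z^2\le\Big\|\sum_{i=1}^N\xi_i\theta_i\Big\|_Z^2\le|\xi|^2,
$$
where the first equality uses linearity of $L$, the second inequality uses $\|L\|\le1$, and the last step is precisely the suborthonormality of $\{\theta_i\}$ in $Z$. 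Since $\xi$ is arbitrary, this verifies \eqref{A.sub} for the system $\{L\theta_i\}$ in $Y$, which proves the second part.

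For the first statement, I would observe that the scaling maps $L_j u:=\alpha_j u$, regarded as maps $H\to H_j$ for $j=1,2$, are contractions: indeed,
$$
\|L_j u\|_{H_j}^2=\alpha_j^2\|u\|_{H_j}^2\le\alpha_1^2\|u\|_{H_1}^2+\alpha_2^2\|u\|_{H_2}^2=\|u\|_H^2,
$$
so that $\|L_j\|\le1$. Applying the already-established second part with $Z=H$, $Y=H_j$ and $L=L_j$ shows that $\{\alpha_j\theta_i\}_{i=1}^N=\{L_j\theta_i\}_{i=1}^N$ is suborthonormal in $H_j$, as claimed.

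I do not anticipate any genuine obstacle: the entire content rests on the single elementary identity $\sum_i\xi_i L\theta_i=L\big(\sum_i\xi_i\theta_i\big)$ combined with the definition of the operator norm. The only point requiring mild care is the organizational one of recognizing the first statement as an instance of the norm-nonincreasing map lemma, rather than reproving it by a separate but essentially identical direct expansion; framing it this way keeps the proof short and makes the role of the weights $\alpha_1,\alpha_2$ transparent.
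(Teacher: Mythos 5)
Your proof is correct, and it takes a slightly different route from the paper's on the first assertion. For the second assertion your argument is exactly the paper's: expand $\bigl\|\sum_i\xi_i L\theta_i\bigr\|_Y^2=\bigl\|L\bigl(\sum_i\xi_i\theta_i\bigr)\bigr\|_Y^2\le\bigl\|\sum_i\xi_i\theta_i\bigr\|_Z^2\le|\xi|^2$. Where you differ is the first assertion: the paper proves it directly, adding the non-negative term $\bigl\|\sum_i\alpha_2\xi_i\theta_i\bigr\|_{H_2}^2$ to the quadratic form $\sum_{i,j}\alpha_1^2\xi_i\xi_j(\theta_i,\theta_j)_{H_1}$ so as to reconstitute the full $H$-inner product and then invoking suborthonormality in $H$, whereas you deduce it from the second assertion by noting that $L_j u:=\alpha_j u$ is a contraction from $H$ to $H_j$ --- which is the same non-negativity of the complementary term, repackaged as $\|L_j\|\le1$. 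The two computations are one inequality read in different orders, so nothing of substance changes; your organization is marginally cleaner in that the scaling statement becomes a corollary of a single norm-nonincreasing-map principle rather than a separate expansion, at the cost of requiring the second part to be established first (the paper's part-one proof is self-contained, which matters only in that the paper actually proves the parts in the opposite order). One point you use implicitly and could state: applying the second part with $Z=H$ requires the norm on $H_1\cap H_2$ to come from an inner product, namely $\alpha_1^2(\cdot,\cdot)_{H_1}+\alpha_2^2(\cdot,\cdot)_{H_2}$, which the lemma's hypothesis grants --- and in fact suborthonormality only ever uses the quadratic form, so even completeness of $H$ is never needed.
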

\begin{proof}Indeed,
%$$
\begin{multline*}
\sum_{i,j}\alpha_1^2\xi_i\xi_j(\theta_i,\theta_j)_{H_1}\le
\biggl\|\sum_i\alpha^2_2\xi_i\theta_i\biggr\|^2_{H_2}+
\sum_{i,j}\alpha_1^2\xi_i\xi_j(\theta_i,\theta_j)_{H_1}=\\=
\sum_{i,j}\xi_i\xi_j\bigl(\alpha_1^2(\theta_i,\theta_j)_{H_1}+
\alpha_2^2(\theta_i,\theta_j)_{H_2}\bigr)=\sum_{i,j}\xi_i\xi_j\delta_{ij}=|\xi|^2,
\end{multline*}
%$$
and the proof for $\alpha_2\theta_i$ is analogous. \color{black} For the second part of the lemma we have
$$
\sum_{i,j}\xi_i\xi_j(L\theta_i,L\theta_j)=\|L(\sum_i\xi_i\theta_i)\|^2_Y\le\|\sum_i\xi_i\theta_i\|^2_Z\le|\xi|^2
$$
and the lemma is proved. \color{black}
\end{proof}
We start with the version of the Lieb--Thirring inequality,
 proved in  \cite{Tem} for orthonormal systems for bounded
 domains with the constant depending on $\Omega$.
\begin{proposition} Let $\{\theta_i\}_{i=1}^N\subset H^1(\R^3)$
be suborthonormal in $L^2(\R^3)$ and let
$$
\rho(x):=\sum_{i=1}^N\theta_i^2(x).
$$
Then
%$$
\begin{equation}\label{A.LT2}
\|\rho\|_{L^2}\le C_{LT}^{\frac38}C_{Sob,6}^{\frac34}\(\sum_{i=1}^N\|\Nx\theta_i\|^2_{L^2}\)^{\frac34},
\end{equation}
%$$
where $C_{LT}$ is the Lieb-Thirring constant (for the $L^{\frac53}$-norm of $\rho$) and $C_{Sob,6}$ is the embedding constant for $H^1_0\subset L^6$.
\end{proposition}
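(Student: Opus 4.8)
The plan is to bound $\|\rho\|_{L^2}$ by interpolating between two endpoint estimates for $\rho$, both expressed through the ``kinetic energy'' $R:=\sum_{i=1}^N\|\Nx\theta_i\|_{L^2}^2$: a genuine Lieb--Thirring bound living in $L^{5/3}$, and a cheap Sobolev bound living in $L^3$. Since $\tfrac53<2<3$, H\"older's inequality sandwiches $\|\rho\|_{L^2}$ between the two, and the powers of $R$ will combine to exactly $\tfrac34$, which is what makes the final exponent come out right.

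First I would record the two endpoint bounds. For the lower exponent I would invoke the Lieb--Thirring inequality in its density form on the whole space $\R^3$: for any system suborthonormal in $L^2(\R^3)$ one has $\int_{\R^3}\rho^{5/3}\,dx\le C_{LT}\,R$, hence $\|\rho\|_{L^{5/3}}\le C_{LT}^{3/5}R^{3/5}$. Here the passage from orthonormal to suborthonormal systems is in fact the natural setting: suborthonormality \eqref{A.sub} is equivalent to the one-particle operator $\gamma=\sum_i(\cdot,\theta_i)\theta_i$ satisfying $0\le\gamma\le 1$, which is precisely the hypothesis under which the Lieb--Thirring inequality is normally stated, with $R=\Tr(-\Dx\,\gamma)$. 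For the higher exponent, since $\rho=\sum_i\theta_i^2\ge0$, the triangle inequality in $L^3$ together with $\|\theta_i^2\|_{L^3}=\|\theta_i\|_{L^6}^2$ and the Sobolev embedding $\|\theta_i\|_{L^6}\le C_{Sob,6}\|\Nx\theta_i\|_{L^2}$ gives
\[
\|\rho\|_{L^3}\le\sum_{i=1}^N\|\theta_i\|_{L^6}^2\le C_{Sob,6}^2\,R.
\]

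Next I would interpolate. Solving $\tfrac12=\tfrac{\theta}{5/3}+\tfrac{1-\theta}{3}$ gives $\theta=\tfrac58$, so H\"older yields $\|\rho\|_{L^2}\le\|\rho\|_{L^{5/3}}^{5/8}\|\rho\|_{L^3}^{3/8}$. Substituting the two endpoint estimates,
\[
\|\rho\|_{L^2}\le\bigl(C_{LT}^{3/5}R^{3/5}\bigr)^{5/8}\bigl(C_{Sob,6}^2R\bigr)^{3/8}
=C_{LT}^{3/8}C_{Sob,6}^{3/4}R^{3/4},
\]
since $\tfrac35\cdot\tfrac58+\tfrac38=\tfrac34$ and $2\cdot\tfrac38=\tfrac34$. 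This is exactly \eqref{A.LT2}.

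The only genuinely delicate point is the first step: correctly invoking the whole-space Lieb--Thirring bound $\int\rho^{5/3}\le C_{LT}R$ for suborthonormal (equivalently, $0\le\gamma\le1$) systems, in place of the $\Omega$-dependent bounded-domain version of \cite{Tem}. Everything else is routine bookkeeping of the Sobolev constant and the interpolation exponents.
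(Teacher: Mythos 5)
Your proposal is correct and follows essentially the same route as the paper: the Lieb--Thirring bound $\|\rho\|_{L^{5/3}}^{5/3}\le C_{LT}\sum_i\|\Nx\theta_i\|^2_{L^2}$ (valid for suborthonormal systems, which the paper justifies by citing \cite{I2005}), the $L^3$ bound via the triangle inequality and the Sobolev embedding $H^1_0\subset L^6$, and the H\"older interpolation $\|\rho\|_{L^2}\le\|\rho\|_{L^{5/3}}^{5/8}\|\rho\|_{L^3}^{3/8}$ with identical exponents and constants. Your remark that suborthonormality is equivalent to $0\le\gamma\le1$ for the associated one-particle operator is a correct and standard observation consistent with the paper's treatment.
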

\begin{proof} Indeed,
the classical Lieb--Thirring inequality says that% it is well-known that
$$
\|\rho\|_{L^{\frac53}}^{\frac53}\le C_{LT}\sum_{i=1}^N\|\Nx\theta_i\|^2_{L^2},
$$
where $C_{LT}$ is \color{black} an absolute constant, see \cite{FLW}
for its  best to date estimate\color{black}.
 %on $N$ and $\theta_i$, see e.g. \cite{FLW}.
 On the other hand, %using the Minkowski inequality,
\begin{equation}\label{rhoL3}
\|\rho\|_{L^3}=\|\sum_i\theta_i^2\|_{L^3}\le\sum_i\|\theta_i\|_{L^6}^2\le
 C_{Sob,6}^2\sum_{i=1}^N\|\Nx\theta_i\|^2_{L^2}.
\end{equation}
\color{black}Finally, the H\"older inequality\color{black},
%The interpolation between $L^{5/3}$ and $L^3$,
namely,
$$
\|\rho\|_{L^2}\le\|\rho\|_{L^{\frac53}}^{\frac58}\|\rho\|^{\frac38}_{L^3}
$$
 gives the desired estimate and finishes the proof of the proposition.
\end{proof}
We now turn to the estimates for the analogue of the spectral $\zeta$ finction for the Stokes operator in a bounded domain $\Omega$. Let $0<\lambda_1\le\lambda_2\le\cdots$ be the eigenvalues of this Stokes operator with Dirichlet boundary conditions, $s\in[0,1]$ and let
$$
\zeta(\alpha,s,N):=\sum_{i=1}^N\frac{\lambda_i}{1+(\alpha\lambda_i)^s}.
$$
Using the Berezin--Li--Yau inequality \color{black}
(see \cite{FLW} for the classical case of the Dirichlet
Laplacian and \cite{FA09} for the case of the Stokes operator)
\color{black}
\begin{equation}\label{BLY}
\lambda_k\ge\frac{\lambda_1+\cdots+\lambda_k}k\ge C_{BLY}|\Omega|^{-\frac23} k^{\frac23},
\end{equation}
together with the monotonicity of the function $\lambda\to \frac{\lambda}{1+(\alpha\lambda)^s}$, we get
%$$
\begin{equation}\label{A.zeta}
\zeta(\alpha,s,N)\ge
\color{black}C_{BLY}^{}\color{black}|\Omega|^{-\frac23}\sum_{k=1}^N
\frac{k^{\frac23}}{1+C_{BLY}^sG_1^sk^{\frac{2s}3}},
\end{equation}
%$$
where $G_1:=\alpha|\Omega|^{-\frac23}$ is the second dimensionless
\color{black} parameter\color{black}. %invariant.
This allows us to get the following proposition.
\begin{proposition}\label{PropA.est} Let $s\in[0,1]$ and $\alpha>0$. Then,
%$$
\begin{equation}\label{A.gest}
\zeta(\alpha,s,N)\ge \color{black}\frac 3{5}C_{BLY}^{}\color{black}
|\Omega|^{-2/3}\frac1{1+C_{BLY}^s G_1^s}N^{(5-2s)/3}.
\end{equation}
Moreover, if $C_{BLY}G_1N^{2/3}\le1$, we have
%$$
\begin{equation}\label{A.ggest}
\zeta(\alpha,s,N)\ge \color{black}\frac3{10}C_{BLY}^{}\color{black}
|\Omega|^{-2/3}N^{5/3}.
\end{equation}
\end{proposition}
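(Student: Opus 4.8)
The plan is to take the reduction \eqref{A.zeta} as the starting point: it already gives
$$\zeta(\alpha,s,N)\ge C_{BLY}|\Omega|^{-2/3}\sum_{k=1}^N\frac{k^{2/3}}{1+C_{BLY}^sG_1^sk^{2s/3}},$$
so the whole proposition reduces to a lower bound for the elementary sum $S_N:=\sum_{k=1}^N k^{2/3}/(1+C_{BLY}^sG_1^sk^{2s/3})$. Writing $c:=C_{BLY}^sG_1^s$ for brevity, I would prove both displayed inequalities with the same two ingredients: a uniform control of the denominator over $1\le k\le N$, and the integral comparison $\sum_{k=1}^N k^{2/3}\ge\int_0^N x^{2/3}\,dx=\frac35 N^{5/3}$, which holds because $x\mapsto x^{2/3}$ is increasing.

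For \eqref{A.gest} I would first note that $s\ge0$ makes $k\mapsto k^{2s/3}$ nondecreasing, so $k^{2s/3}\le N^{2s/3}$ and hence $1+ck^{2s/3}\le 1+cN^{2s/3}$ for every $k\le N$. Pulling this uniform denominator out of the sum and invoking the integral comparison yields $S_N\ge \frac35 N^{5/3}/(1+cN^{2s/3})$. It then remains to pass from this sharp but awkward bound to the stated one, i.e. to check $N^{5/3}/(1+cN^{2s/3})\ge N^{(5-2s)/3}/(1+c)$. Cross-multiplying, this is equivalent to $N^{2s/3}(1+c)\ge 1+cN^{2s/3}$, which simplifies to $N^{2s/3}\ge1$ and therefore holds since $N\ge1$. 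Multiplying through by $C_{BLY}|\Omega|^{-2/3}$ gives \eqref{A.gest}.

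For \eqref{A.ggest} I would instead exploit the hypothesis $C_{BLY}G_1N^{2/3}\le1$: for each $k\le N$ it gives $c\,k^{2s/3}=(C_{BLY}G_1k^{2/3})^s\le(C_{BLY}G_1N^{2/3})^s\le1$, so the denominator never exceeds $2$. Hence $S_N\ge\frac12\sum_{k=1}^N k^{2/3}\ge\frac{3}{10}N^{5/3}$ by the same integral comparison, and multiplying by $C_{BLY}|\Omega|^{-2/3}$ gives \eqref{A.ggest}.

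The argument is elementary, so I do not anticipate a genuine obstacle; the only delicate points are keeping the denominator estimates uniform in $k$ (which is exactly where $s\ge0$ and, in the second part, the smallness hypothesis enter) and the final reduction to $N^{2s/3}\ge1$, which explains why \eqref{A.gest} carries the factor $1+C_{BLY}^sG_1^s$ rather than the sharper but less usable $1+C_{BLY}^sG_1^sN^{2s/3}$.
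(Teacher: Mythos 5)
Your proof is correct, and for the first inequality it takes a genuinely different (and arguably more transparent) elementary route than the paper, while your second part coincides with the paper's. For \eqref{A.gest}, the paper sets $\beta:=C_{BLY}^sG_1^s$ and uses an add-and-subtract identity, writing $k^{2/3}=\frac1\beta\bigl(k^{2(1-s)/3}(1+\beta k^{2s/3})-k^{2(1-s)/3}\bigr)$ and bounding $\sum_k k^{2(1-s)/3}/(1+\beta k^{2s/3})\le S$ term by term (since $k^{2(1-s)/3}\le k^{2/3}$), which yields the self-bounding inequality $S\ge\frac1\beta\sum_k k^{2(1-s)/3}-\frac1\beta S$, hence $S\ge\frac1{1+\beta}\sum_{k=1}^N k^{2(1-s)/3}\ge\frac{3}{5-2s}\,\frac{N^{(5-2s)/3}}{1+\beta}$, and finally relaxes $\frac3{5-2s}$ to $\frac35$. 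You instead bound the denominator uniformly by its value at $k=N$, obtain $S\ge\frac35\,N^{5/3}/(1+\beta N^{2s/3})$, and then pass to the stated form via the cross-multiplied reduction to $N^{2s/3}\ge1$ --- all steps check out, including the monotonicity use of $s\ge0$. Comparing the two: the paper's intermediate bound retains the slightly better constant $\frac3{5-2s}$ (equal to $1$ at $s=1$) before the final relaxation, whereas your intermediate bound $\frac35\,N^{5/3}/(1+\beta N^{2s/3})$ is sharper in its $N$-dependence and, as you essentially observe, already subsumes \eqref{A.ggest}: the hypothesis $C_{BLY}G_1N^{2/3}\le1$ gives $\beta N^{2s/3}=(C_{BLY}G_1N^{2/3})^s\le1$, so the uniform denominator is at most $2$ and the factor $\frac3{10}$ drops out at once, exactly as in your (and the paper's) direct argument for the second inequality.
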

\begin{proof} Let $\beta:=C_{LBY}^{s}G_1^s$. Then
%$$
\begin{multline*}
S=\sum_{k=1}^N\frac{k^{\frac23}}{1+\beta k^{\frac{2s}3}}=
\frac1\beta\sum_{k=1}^N
\frac{k^{\frac{2(1-s)}3}(\beta k^{\frac{2s}3}+1)-k^{\frac{2(1-s)}3}}{1+\beta k^{\frac{2s}3}}=\\
=\frac1\beta\sum_{k=1}^Nk^{\frac{2(1-s)}3}-\frac1\beta \sum_{k=1}^N
\frac {k^{\frac{2(1-s)}3}}{1+\beta k^{\frac{2s}3}}\ge \frac1\beta
\sum_{k=1}^Nk^{\frac{2(1-s)}3} -\frac1\beta S,
\end{multline*}
%$$
\color{black} which gives \eqref{A.gest}\color{black}:
%$$
%\begin{multline*}
$$
S\ge\frac1{1+\beta}\sum_{k=1}^Nk^{\frac{2(1-s)}3}\ge \frac1{1+\beta}\frac3{5-2s}
\color{black}N\color{black}^{\frac3{5-2s}}\ge%\\\ge
%\frac1{1+\beta}\frac3{(5-2s)2^{3/(5-2s)}}N^{3/(5-2s)}\ge
\color{black} \frac3{5}\color{black}\frac1{1+\beta}N^{\frac{5-2s}3}.
$$
%\end{multline*}
%$$
%for $N\ge2$ and for $N=1$ the estimate remains true as well.
%This gives us estimate \eqref{A.gest}.
\par
In the case when $C_{BLY}G_1N^{2/3}\le1$, we have that $1+\beta k^{2s/3}\le2$ for all $k\le N$ and this gives estimate \eqref{A.ggest} and finishes the proof of the proposition.
\end{proof}
Let us now consider the function
%$$
\begin{equation}\label{A.gest1}
\hat\zeta(\alpha,s,N):=\sum_{k=1}^N\frac{\lambda_k}{\alpha^{-s}+\lambda_k^s}=
\alpha^s\zeta(\alpha,s,N)\ge
\frac3{\color{black}5\color{black}}|\Omega|^{-\frac{2(1-s)}3}
\frac{\color{black}C_{BLY}^{}\color{black}G_1^s}{1+C_{BLY}^sG_1^s}N^{\frac{5-2s}3},
\end{equation}
%$$
where we have used \eqref{A.gest}.
\par
Let now $H^s_A:=D(A^{s/2})$, where $A^s$ is a spectral fractional
 power of the Stokes operator with Dirichlet boundary conditions
in $\Omega$ with the norm $\|u\|_{H^s_A}=\|A^{s/2}u\|_{L^2}$
and let $\{\theta_k\}_{k=1}^N$ be a suborthonormal system in $H^s_A$.
Then, since
$$
D(A^s)=D((-\Delta)^s)\cap\{\divv u=0,\ \ u\cdot n\big|_{\partial\Omega}=0\}, \ \ 0\le s\le 1,
$$
where $(-\Dx)^s$ is a spectral fractional Laplacian in $\Omega$, $H^s_A$
is a closed subspace of $D((-\Dx)^{s/2})$ and, in particular, the norms
$\|A^{s/2}u\|_{L^2}$ and $\|(-\Dx)^{s/2}u\|_{L^2}$ are equivalent on $H_A^s$.
\par
Furthermore, let $\ext_0(u)$ be the  extention of a function
$u(x)$, $x\in\Omega$ by zero when $x\notin\Omega$ and let $(-\Dx)^s_{\R^3}$
be the fractional Laplacian on $\R^3$ (defined, e.g., by the Fourier transform).
Then, an alternative choice of the fractional Laplacian in $\Omega$,
which is sometimes refferred as ``regional fractional Laplacian",
is the following
$$
(-\Dx)^s_{\R^3,\Omega}u:=(-\Dx)^s_{\R^3}(\ext_0(u)).
$$
It is also known, see e.g., \cite{BSV}, that $D((-\Dx)^{s})=D((-\Dx)^{s}_{\R^3,\Omega})$, so the corresponding norms are equivalent on $H^s_A$:
$$
\|A^{s/2}u\|_{L^2(\Omega)}\sim\|(-\Dx)^{s/2}u\|_{L^2(\Omega)}\sim\|(-\Dx)_{\R^3}^{s/2}(\ext_0(u))\|_{L^2(\R^3)}
$$
and, therefore, there exists a dimensionless constant $C_s(\Omega)$, such that
%$$
\begin{equation}\label{A.bad}
\|(-\Dx)^{s/2}_{\R^3}(\ext_0(u))\|_{L^2(\R^3)}\le C_s(\Omega)\|u\|_{H^s_A}
\ \  \forall u\in H^s_A.
\end{equation}
%$$
\color{black}
\begin{lemma}\label{LemA.sub} The constants $C_s(\Omega)\le1$ for $s\in[0,1]$ and, therefore, the system of vectors $\{\ext_0(\theta_i)\}_{i=1}^N$ (where $\{\theta_i\}_{i=1}^N$ is suborthonormal in $H^s_A$) is suborthonormal in $\dot H^s(\R^3)$.
\end{lemma}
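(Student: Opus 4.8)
The plan is to read the claim $C_s(\Om)\le1$ as the assertion that the zero-extension operator $\ext_0$ is a \emph{contraction} from $H^s_A$ into $\dot H^s(\R^3)$ for every $s\in[0,1]$, and to obtain this by complex interpolation between the two endpoints $s=0$ and $s=1$, where $\ext_0$ turns out to be an isometry. Once this is established, the suborthonormality of $\{\ext_0(\theta_i)\}$ in $\dot H^s(\R^3)$ is immediate from the second part of Lemma~\ref{LemA1}, applied with $Z=H^s_A$, $Y=\dot H^s(\R^3)$ and $L=\ext_0$.

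First I would treat the endpoints. For $s=0$ there is nothing to prove: $\|\ext_0(u)\|_{L^2(\R^3)}=\|u\|_{L^2(\Om)}=\|u\|_{H^0_A}$. For $s=1$, recall that $H^1_A=H^1_{0,\sigma}(\Om)$, so $u$ vanishes on $\partial\Om$; hence its zero-extension lies in $H^1(\R^3)$ with $\Nx\ext_0(u)=\ext_0(\Nx u)$, no singular contribution appearing along $\partial\Om$. Since $\|\ext_0(u)\|^2_{\dot H^1(\R^3)}=\|\Nx\ext_0(u)\|^2_{L^2(\R^3)}=\|\Nx u\|^2_{L^2(\Om)}=(Au,u)=\|u\|^2_{H^1_A}$, the map $\ext_0:H^1_A\to\dot H^1(\R^3)$ is an isometry as well. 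Thus at both endpoints the operator norm equals one.

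Next I would identify the two interpolation scales. On the spectral side of the Stokes operator, $\|u\|^2_{H^s_A}=\sum_n\lambda_n^s|(u,e_n)|^2$, so in the eigenbasis $H^s_A$ is the weighted space with weight $\lambda_n^s$, interpolating between the endpoint weights $1$ and $\lambda_n$; similarly, via the Fourier transform $\|f\|^2_{\dot H^s(\R^3)}=\int|\xi|^{2s}|\hat f(\xi)|^2\,d\xi$ realizes $\dot H^s(\R^3)$ as $L^2(|\xi|^{2s}\,d\xi)$, interpolating between weights $1$ and $|\xi|^2$. Complex interpolation of weighted $L^2$-spaces is exact with the geometric-mean weight, so $[H^0_A,H^1_A]_s=H^s_A$ and $[\dot H^0(\R^3),\dot H^1(\R^3)]_s=\dot H^s(\R^3)$ hold with equality of norms. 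Feeding the two endpoint bounds into the Calder\'on complex-interpolation theorem then yields $\|\ext_0\|_{H^s_A\to\dot H^s(\R^3)}\le 1^{1-s}\cdot1^s=1$, i.e. $C_s(\Om)\le1$, and the lemma follows.

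The step I expect to demand the most care is the \emph{isometric} (as opposed to merely isomorphic) identification of the interpolation spaces: the target bound $C_s(\Om)\le1$ requires the constant to be exactly $1$, so it is essential that both $[H^0_A,H^1_A]_s=H^s_A$ and $[\dot H^0,\dot H^1]_s=\dot H^s$ hold with equal, not just equivalent, norms. This is precisely why I would reduce each scale to a weighted-$L^2$ scale --- on the spectral side of $A$ and on the Fourier side, respectively --- where complex interpolation is an exact functor of exponent $s$. A pleasant feature of this route is that the delicate boundary behaviour of fractional powers on a domain, which distinguishes the spectral, restricted and regional fractional Laplacians, never enters directly: it is absorbed into the single clean fact that $\ext_0$ is norm-preserving at $s=0$ and $s=1$.
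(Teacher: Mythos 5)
Your proof is correct and takes essentially the same route as the paper: norm-one endpoint bounds for $\ext_0$ at $s=0$ and $s=1$, exact (isometric) identification of the interpolation scales via the weighted-$\ell^2$ realization in the eigenbasis on the domain side and the Fourier/Plancherel realization on the $\R^3$ side, followed by the second part of Lemma \ref{LemA1} to transfer suborthonormality. The only difference is cosmetic: the paper factors the argument through the spectral Dirichlet--Laplacian scale $H^s_{-\Delta}$ (first bounding the identity $H^s_A\to H^s_{-\Delta}$ by one, then interpolating $\ext_0$ on the Laplacian couple), whereas you interpolate $\ext_0$ directly on the Stokes couple $\left(H^0_A,H^1_A\right)$ --- which is legitimate, since on divergence-free fields the $H^1_A$-norm coincides with the $H^1_0$-norm, so $\ext_0$ is already an endpoint isometry there.
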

\begin{proof} Let us start with comparison of Stokes and Laplace operators. We know that, for any $u\in H^1_A=H^1_0(\Omega)\cap\{\divv u=0\}\subset H^1_{-\Dx}=H^1_0(\Omega)$, we have
%$$
\begin{multline*}
\|u\|^2_{H^1_A}=\|A^{1/2}u\|^2_{L^2}=(Au,u)=\\=(-\Pi\Dx u,u)=(-\Dx u,u)=\|\Nx u\|^2_{L^2}=\|(-\Dx)^{1/2}u\|^2_{L^2}=\|u\|^2_{H^1_{-\Delta}}.
\end{multline*}
%$$
Thus, the identity operator $I$ (the embedding operator) acts from $H^1_A$ to $H^1_{-\Dx}$ and also from $H^0_A$ to $H_{-\Dx}^0=L^2(\Omega)$ and its norm in both cases equals to one. On the other hand, by the Parseval equality,
$$
\|u\|^2_{H^s_A}=\sum_{i=1}^\infty\lambda_i^s(u,e_i)^2,\ \ \|u\|^2_{H^s_{-\Dx}}=\sum_{i=1}^\infty\mu_i^s(u,\hat e_i)^2,
$$
where $\{\lambda_i,e_i\}$ and $\{\mu_i,\hat e_i\}$ are the eigenvalues and eigenfunctions for the Stokes and Laplace operators respectively. Therefore, the spaces $H^s_A$ and $H^s_{-\Dx}$ are isometric to the weighted spaces of sequences  $l^2_{\boldsymbol{\lambda}^s}$ and $l^2_{\boldsymbol{\mu}^s}$. Since for weighted $l^2$-spaces complex interpolation also gives isometries between the corresponding weighted and interpolation norms, we conclude that the identity operator acts from $H^s_A$ to $H^s_{-\Dx}$ and its norm does not exceed one:
$$
\|u\|_{H^s_{-\Dx}}\le \|u\|_{H^s_A},\ s\in[0,1],\ u\in H^s_A.
$$
Analogously, by the Fourier transform and Plancherel equality, the norm in the homogeneous Sobolev space $\dot H^s(\R^3)$ is
$$
\|u\|^2_{\dot H^s}=((-\Dx)^s_{\R^3}u,u)=\|(-\Dx)^{s/2}_{\R^3} u\|^2_{L^2}=C\int_{R^3}|\xi|^{2s}|\hat u(\xi)|^2\,d\xi,
$$
where $\hat u$ is the Fourier transform of $u$ and $C=C(d)$ is an absolute constant depending only on the dimension $d$. By this reason, the extension operator $\ext_0$ actually acts from the interpolation pair $[H^1_0(\Omega),L^2(\Omega)]=[l^2_{\boldsymbol{\mu}^1}, l^2]$ to the pair $[\dot H^1,L^2]=[L^2_{|\xi|^2},L^2]$ and its norms at the endpoints equal to one. Using the complex interpolation again, we end up with the estimate
$$
\|\ext_0(u)\|_{\dot H^s(\R^3)}\le \|u\|_{H^s_{-\Dx}}\le \|u\|_{H^s_A},\ s\in[0,1],\ \ u\in H^s_A
$$
and together with Lemma \ref{LemA1} finish the proof of the lemma.
\end{proof}
\color{black}

We combine this lemma with the inequality
\color{black}
for families of functions $\{\theta_k\}_{k=1}^N$ which are  (sub)orthonormal in
%the classical Cwikel-Lieb-Rosenblum (CLR) inequality, see \cite{Lieb,FLW},
%which claims that for suborthonormal systems
$\dot H^s(\R^3)$, see \cite{Lieb}
\begin{equation}\label{Lieb}
\|\rho\|_{L^{\frac3{3-2s}}(\R^3)}\le C_{s,CLR}N^{1-\frac{2s}3}
\end{equation}
for $0\le s\le 1$ (the notation for the constant is explained at the end of this
work).
\color{black}
Applying this inequality to our system of
vectors which are suborthonormal in $H^s_A$, we arrive at
\begin{equation}
\label{CLR}
\|\rho\|_{L^{\frac3{3-2s}}(\Omega)}\le C_s(\Omega)^2C_{s,CLR}N^{1-\frac{2s}3}\le
C_{s,CLR}N^{1-\frac{2s}3}.
\end{equation}
Finally, we use the H\"older inequality  to get the desired estimates for the $L^2$-norm of~$\rho$.
\begin{proposition}\label{PropA.l2} Let $\{\theta_k\}_{k=1}^N$ be a system of suborthonormal vectors in $H^s_A$, Then, for $0\le s\le\frac34$, we have
%$$
\begin{equation}\label{A.L2-small}
\|\rho\|_{L^2(\Omega)}\le \color{black}C_{Sob,6}^{\frac{3-4s}{2(1-s)}}\color{black} %C_s(\Omega)^{\frac1{2(1-s)}}
C_{s,CLR}^{\frac1{4(1-s)}}N^{\frac{3-2s}{12(1-s)}}
\(\sum_{k=1}^N\|\Nx\theta_k\|^2_{L^2}\)^{\frac{3-4s}{4(1-s)}}.
\end{equation}
%$$
For the case $3/4\le s\le 1$, we have another estimate
%$$
\begin{equation}\label{A.L2-big}
\|\rho\|_{L^2(\Omega)}\le %C_s(\Omega)^2
C_{s,CLR}|\Omega|^{\frac{2s}3-\frac12}N^{1-\frac{2s}3}.
\end{equation}
%$$
\end{proposition}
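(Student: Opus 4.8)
The plan is to derive both estimates by interpolating the $L^2$-norm of $\rho:=\sum_{k=1}^N\theta_k^2$ between two bounds that are already available: the Cwikel--Lieb--Rosenblum type inequality \eqref{CLR}, which controls $\|\rho\|_{L^{p_0}(\Omega)}$ with $p_0:=\frac{3}{3-2s}$, and the elementary bound \eqref{rhoL3}, which controls $\|\rho\|_{L^3(\Omega)}$ by $\sum_{k=1}^N\|\Nx\theta_k\|_{L^2}^2$. The key observation organizing the proof is that the exponent $p_0$ equals $2$ exactly when $s=\frac34$: for $s\ge\frac34$ one has $p_0\ge2$ and a single H\"older step on the bounded domain suffices, whereas for $s\le\frac34$ one has $p_0\le2$ and $L^2$ must be squeezed between $L^{p_0}$ and $L^3$. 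This is precisely the case split in the statement.

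For the range $s\in[\frac34,1]$ I would argue as follows. Since $p_0\ge2$, H\"older's inequality on $\Omega$ gives $\|\rho\|_{L^2}\le|\Omega|^{\frac12-\frac1{p_0}}\|\rho\|_{L^{p_0}}$, and a direct computation shows $\frac12-\frac1{p_0}=\frac{2s}3-\frac12$. Inserting \eqref{CLR} then yields $\|\rho\|_{L^2}\le C_{s,CLR}|\Omega|^{\frac{2s}3-\frac12}N^{1-\frac{2s}3}$, which is exactly \eqref{A.L2-big}. Here the bound $C_s(\Omega)\le1$ from Lemma \ref{LemA.sub} has already been incorporated into \eqref{CLR}.

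For the range $s\in[0,\frac34]$ I would interpolate $L^2$ between $L^{p_0}$ and $L^3$, i.e. determine $\gamma\in[0,1]$ from $\frac12=\frac{\gamma}{p_0}+\frac{1-\gamma}{3}$. Using $\frac1{p_0}=1-\frac{2s}3$ this gives $\gamma=\frac1{4(1-s)}$ and $1-\gamma=\frac{3-4s}{4(1-s)}$, both of which lie in $[0,1]$ precisely for $s\in[0,\frac34]$. H\"older's inequality then furnishes $\|\rho\|_{L^2}\le\|\rho\|_{L^{p_0}}^{\gamma}\|\rho\|_{L^3}^{1-\gamma}$, and I would bound the first factor by \eqref{CLR} and the second by \eqref{rhoL3}. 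Substituting the values of $\gamma$ and $1-\gamma$ one checks that $N^{\gamma(1-2s/3)}=N^{\frac{3-2s}{12(1-s)}}$, that the power of $R:=\sum_{k}\|\Nx\theta_k\|_{L^2}^2$ is $R^{\frac{3-4s}{4(1-s)}}$, and that the constants assemble to $C_{s,CLR}^{\frac1{4(1-s)}}C_{Sob,6}^{\frac{3-4s}{2(1-s)}}$, reproducing \eqref{A.L2-small}.

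The proof involves no genuine analytic obstacle, since the hard work has been front-loaded into the spectral inequality \eqref{CLR}; the only thing demanding care is the exponent bookkeeping --- verifying that the interpolation parameter stays in $[0,1]$ over each stated range and that all powers of $N$, $R$, $|\Omega|$ and the two constants match the target. I would also emphasize that \eqref{rhoL3} uses only the triangle inequality in $L^3$ together with the Sobolev embedding $H^1_0\subset L^6$, and requires no orthonormality hypothesis, so it applies verbatim to our system, which is only assumed suborthonormal in $H^s_A$.
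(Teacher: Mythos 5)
Your proposal is correct and follows essentially the same route as the paper: the paper likewise proves \eqref{A.L2-small} by the H\"older interpolation $\|\rho\|_{L^2}\le\|\rho\|_{L^{3/(3-2s)}}^{\kappa}\|\rho\|_{L^3}^{1-\kappa}$ with $\kappa=\frac{1}{4(1-s)}$, bounding the factors via \eqref{CLR} and \eqref{rhoL3}, and proves \eqref{A.L2-big} by the single H\"older step $\|\rho\|_{L^2}\le|\Omega|^{\frac{2s}{3}-\frac12}\|\rho\|_{L^{3/(3-2s)}}$ on the bounded domain. Your exponent bookkeeping (the values of $\gamma$ and $1-\gamma$, the power of $N$, and the assembly of $C_{s,CLR}$ and $C_{Sob,6}$) checks out and matches the stated bounds.
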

\begin{proof} Indeed, for the first estimate we use
H\"older's inequality
%the interpolation inequality
$$
\|\rho\|_{L^2}\le\|\rho\|_{L^{\frac3{3-2s}}}^\kappa\|\rho\|_{L^3}^{1-\kappa}
$$
with $\kappa=\frac1{4(1-s)}$  and $\frac3{3-2s}\le2$
combined with \eqref{CLR} and the already obtained estimate
\eqref{rhoL3} for the $L^3$
norm of $\rho$.
For the case $s\ge3/4$, we use H\"older inequality
$$
\|\rho\|_{L^2}\le |\Omega|^{\frac{2s}3-\frac12}\|\rho\|_{L^{\frac3{3-2s}}}
$$
and \eqref{CLR} gives us the desired estimate and finishes the proof of the proposition.
\end{proof}
We conclude with one more useful inequality related with min-max principle.
\begin{proposition}\label{PropA.mm} Let $\frac12\le s\le1$, $\eb\ge0$, let $\{\bar\theta_k\}_{k=1}^N$ be (sub)orthonormal in $L^2$ and let
$$
\theta_k:=(\eb+A^s)^{-1/2}\bar\theta_k.
$$
Then the following estimate holds:
%$$
\begin{equation}\label{A.mm}
\sum_{k=1}^N\|\theta_k\|^2_{L^3}\le \frac3{\color{black}2\color{black}}C_{Sob,6}|\Omega|^{\frac23(s-\frac12)}C_{BLY}^{\frac12-s}N^{\frac23(2-s)},
\end{equation}
%$$
where $C_{Sob,3}$ is the embedding constant $H^{1/2}_A\subset L^3$ and
by complex interpolation it holds that
\color{black}$C_{Sob,3}\le C_{Sob,6}^{1/2}$.
\end{proposition}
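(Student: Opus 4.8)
The plan is to reduce the sum of $L^3$-norms to a spectral sum for the Stokes operator and then close the estimate with the Berezin--Li--Yau inequality \eqref{BLY}. First I would record the Sobolev embedding $H^{1/2}_A\subset L^3$ in the form $\|u\|_{L^3}\le C_{Sob,3}\|u\|_{H^{1/2}_A}=C_{Sob,3}\|A^{1/4}u\|_{L^2}$, where the bound $C_{Sob,3}\le C_{Sob,6}^{1/2}$ follows by complex interpolation of the embedding operator between the endpoints $H^0_A=L^2\hookrightarrow L^2$ (norm one) and $H^1_A\hookrightarrow L^6$ (norm $C_{Sob,6}$) at $\theta=\tfrac12$, using $[L^2,L^6]_{1/2}=L^3$ and $[H^0_A,H^1_A]_{1/2}=H^{1/2}_A$. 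Applying this to each $\theta_k$, squaring and summing gives
$$
\sum_{k=1}^N\|\theta_k\|_{L^3}^2\le C_{Sob,3}^2\sum_{k=1}^N (A^{1/2}\theta_k,\theta_k),
$$
since $\|\theta_k\|_{H^{1/2}_A}^2=(A^{1/2}\theta_k,\theta_k)$ by \eqref{1.HA}.

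Next I would rewrite the right-hand side using $\theta_k=(\eb+A^s)^{-1/2}\bar\theta_k$ and the fact that all operators involved are functions of $A$, hence commute, obtaining $\sum_k(A^{1/2}\theta_k,\theta_k)=\sum_k\bigl(A^{1/2}(\eb+A^s)^{-1}\bar\theta_k,\bar\theta_k\bigr)$. The key trick is the operator inequality $A^{1/2}(\eb+A^s)^{-1}\le A^{1/2-s}$, valid because $\eb\ge0$ forces $(\eb+A^s)^{-1}\le A^{-s}$ on the spectrum; this replaces the non-monotone symbol $\lambda^{1/2}/(\eb+\lambda^s)$ by the symbol $\lambda^{1/2-s}$, which is monotone \emph{decreasing} precisely because $s\ge\tfrac12$. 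After this comparison the eigenvalues of $A^{1/2-s}$ are exactly $\lambda_i^{1/2-s}$ arranged in nonincreasing order, so the min-max principle for suborthonormal families yields
$$
\sum_{k=1}^N\bigl(A^{1/2-s}\bar\theta_k,\bar\theta_k\bigr)\le\sum_{i=1}^N\lambda_i^{1/2-s}.
$$

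Finally I would feed in the Berezin--Li--Yau bound $\lambda_i\ge C_{BLY}|\Omega|^{-2/3}i^{2/3}$; since $\tfrac12-s\le0$ the inequality reverses and gives $\lambda_i^{1/2-s}\le C_{BLY}^{1/2-s}|\Omega|^{\frac23(s-\frac12)}i^{\frac23(\frac12-s)}$. The exponent $\gamma=\tfrac23(\tfrac12-s)\in[-\tfrac13,0]$ satisfies $\gamma>-1$, so comparison of the decreasing summand with $\int_0^N x^\gamma\,dx$ controls the power sum by $\tfrac{3}{2(2-s)}N^{\frac23(2-s)}$. Collecting everything, using $C_{Sob,3}^2\le C_{Sob,6}$ and the elementary bound $\tfrac{3}{2(2-s)}\le\tfrac32$ (valid since $2-s\ge1$), produces exactly \eqref{A.mm}. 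I expect the only genuinely delicate point to be the operator comparison together with the justification of the min-max bound for merely suborthonormal (rather than orthonormal) systems: the monotonicity trick is what makes the $N$ largest eigenvalues of the relevant operator coincide with $\lambda_1^{1/2-s},\dots,\lambda_N^{1/2-s}$, after which the remaining steps are a routine computation.
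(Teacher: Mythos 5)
Your proposal is correct and follows essentially the same route as the paper's proof: the embedding $H^{1/2}_A\subset L^3$ with $C_{Sob,3}^2\le C_{Sob,6}$, the identity $\|\theta_k\|^2_{H^{1/2}_A}=\bigl(A^{1/2}(\eb+A^s)^{-1}\bar\theta_k,\bar\theta_k\bigr)$, the min-max bound by $\sum_{k=1}^N\lambda_k^{1/2-s}$, and the Berezin--Li--Yau inequality with the integral comparison giving the factor $\tfrac{3}{2(2-s)}\le\tfrac32$. The only difference is that you spell out steps the paper leaves implicit --- the operator comparison $(\eb+A^s)^{-1}\le A^{-s}$ and the monotonicity of $\lambda\mapsto\lambda^{1/2-s}$ for $s\ge\tfrac12$, which justifies replacing the $N$ largest eigenvalues of $A^{1/2}(\eb+A^s)^{-1}$ by $\lambda_1^{1/2-s},\dots,\lambda_N^{1/2-s}$ --- and these are filled in correctly.
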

\begin{proof}Using the embedding $H^{1/2}_A\subset L^3$, we get
$$
\|\theta_k\|^2_{L^3}\le C_{Sob,3}^2\|A^{1/4}(\eb+A^s)^{-1/2}\bar\theta_k\|^2_{L^2}=C^2_{Sob,3}
(A^{1/2}(\eb+A^s)^{-1}\bar\theta_k,\bar\theta_k)
$$
Then, by min max principle,
$$
\sum_{k=1}^N\|\theta_k\|^2_{L^3}\le C_{Sob,3}^2\sum_{k=1}^N\lambda_k^{\frac12-s}\le \frac3{\color{black}2\color{black}}
C_{Sob,3}^2C_{BLY}^{\frac12-s}|\Omega|^{\frac23(s-\frac12)}N^{\frac23(\frac12-s)+1}.
$$
\end{proof}

We will now give explicit values of the dimensionless constants
that played crucial role in the dimension estimates.

1. The sharp constant in the Sobolev inequality
$$
\|u\|_{L^{2d/(d-2)}}\le C_{Sob}(d)\|\nabla u\|_{L^2}
$$
is well known, see, for instance,  \cite{FLW} for a rearrangement-free proof:
$$
C_{Sob,6}:=C_{Sob}(3)=\frac1{\sqrt{3}}\(\frac2\pi\)^{2/3}.
$$
We also point out that in the vector case (in our consideration)
the constant is preserved.

\medskip

2. The Berezin--Li--Yau constant (sharp, by notational definition) for the Stokes operator
in inequality \eqref{BLY}
was found in \cite{FA09}:
$$
C_{BLY}(d)=\frac2{2+d}\left(\frac{(2\pi)^d}{\omega_d(d-1)}\right)^{2/d},
$$
where $\omega_d$ denotes the volume of the unit ball in $\mathbb R^d$,
so that
$$
C_{BLY}=\frac25(3\pi^2)^{2/3}.
$$

\medskip

3. For the Lieb--Thirring constant we have
  \cite{FLW}
$$
C_{LT}\le
    \frac56\frac{2^{1/3}}{\pi^{4/3}}\cdot(1.456\dots)^{2/3}\,.
$$
In the scalar case, the constant $C_{LT}(d)$
is related to the constant $\mathrm L_{1,d}$ by the equality~\cite{FLW}
$$
\mathrm{c}_{\mathrm {LT}}(d)=(2/d)(1+d/2)^{1+2/d}\mathrm L_{1,d}^{2/d},
$$
where $\mathrm L_{1,d}$ is the constant in the Lieb--Thirring bound
for the negative trace of the Schrodinger operator
$-\Delta-V(x)$, $V(x)\ge0$ in $\mathbb R^d$:
$$
\Tr(-\Delta-V)_-\le\mathrm L_{1,d}\int_{\mathbb R^d}V(x)^{1+d/2}dx.
$$
In turn, the best  to date estimate of the constant
$\mathrm L_{1,d}$ is (see \cite{FLW})
$\mathrm L_{1,d}\le\mathrm L_{1,d}^\mathrm{cl}\cdot 1.456\dots$, where
$\mathrm L_{1,d}^\mathrm{cl}=\frac{\omega_d}{(2\pi)^d}\frac2{d+2}$.
Moreover, in the vector case, the constant $\mathrm L_{1,d}$
goes over to $d\mathrm L_{1,d}$. In the two-dimensional divergence-free
case, the constant does not double.

The fact that in going over  to a suborthonormal
system the constant in the Lieb--Thirring inequality does not
increase is shown in \cite{I2005}.

\medskip

4. Finally, turning to inequality \eqref{Lieb} we point out that for $s=1$
the constant $C_{1,CLR}$ is the  particular case ($d=3$) in the  inequality
due to E.\,Lieb \cite{Lieb}
$$
\|\rho\|_{L^p}\le L_{0,d}^{2/d}\frac d{d-2}N^{(d-2)/d},\quad p=d/(d-2).
$$
Straightforward passage to the vector-valued case and the case of suborthonormal systems
was done in \cite{IKZ}:
$$
\|\rho\|_{L^p}\le (dL_{0,d})^{2/d}\frac d{d-2}N^{(d-2)/d},\quad p=d/(d-2).
$$
Here $L_{0,d}$ is the constant in the celebrated Cwikel--Lieb--Rozenblum
bound for the number $N(0,-\Delta-V)$ of  negative eigenvalues of
the Schr\"odinger operator $-\Delta-V$, $V(x)\ge0$ in $\mathbb
R^d$, see~\cite{Cwikel,L,R}:
\begin{equation*}\label{CLRbound}
N(0,-\Delta-V)\le L_{0,d}\int_{\mathbb R^d}V(x)^{d/2}dx.
\end{equation*}
The best to date bound for $L_{0,3}$ is Lieb's bound~\cite{L}
$$L_{0,3}\le 6.8693\cdot L_{0,3}^{\mathrm{cl}}=0.116\dots.
$$
Hence,
$$
C_{1,CLR}\le3(3\cdot 0.116\dots)^{2/3}=1.484251229.
$$

In the fractional case $s\in[1/2,1]$ arguing as in \cite{Lieb} (see also~\cite{IKZ})
we have
$$
\|\rho\|_{L^{d/{(d-2s)}}}\le (dL_{0,d}(s))^{2s/d}\frac d{d-2s}N^{(d-2s)/d},
$$
where  $L_{0,d}(s)$ is the constant in the fractional Cwikel--Lieb--Rozenblum
bound for the number  of  negative eigenvalues of
the Schr\"odinger operator $(-\Delta)^s-V$, $V(x)\ge0$ in $\mathbb
R^d$:
\begin{equation*}\label{CLRbounds}
N(0,(-\Delta)^s-V)\le L_{0,d}(s)\int_{\mathbb R^d}V(x)^{d/2s}dx,
\quad s<d/2.
\end{equation*}
The explicit estimate of the constant $L_{0,d}(s)$ was found in
\cite{FrankJST}:
$$
L_{0,d}(s)\le
\left(\frac{d(d+2s)}{(d-2s)^2}\right)^{(d-2s)/2s}\frac{\omega_d}{(2\pi)^d}\frac d{d-2s},
$$
therefore
$$
C_{s,CLR}\le
\left[\left(\left(3\frac{3(3+2s)}{(3-2s)^2}\right)^{(3-2s)/2s}
\frac{1}{6\pi^2}\frac 3{3-2s}\right)^{2s/3}\frac3{3-2s}\right],
$$
or, maximising the corresponding expression on the interval
$s\in[1/2,1]$, we find that on the interval $s\in[1/2,1]$
$$
C_{s,CLR}\le\max_{s\in[1/2,1]}[\dots]=
[\dots]_{s=1/2}=\frac94\frac{6^{1/3}}{\pi^{2/3}}=1.906044430.
$$
{\bf Acknowledgement.} This work was supported by Moscow Center of
Fundamental and Applied Mathematics, Agreement with the Ministry
of Science and Higher Education of the Russian Federation, No. 075-15-2025-346 (AI) as well by Russian Science Foundation grant no.
25-11-20069 (SZ).

\end{document}